\def\showauthornotes{1}
\newcommand{\Authornote}[2]{{\sf\small\color{red}{[#1: #2]}}}
\newcommand{\Authornote}[2]{}
\newtheorem{theorem}{Theorem}
\newtheorem{lemma}{Lemma}
\newtheorem{corollary}{Corollary}
\newtheorem{proposition}{Proposition}
\theoremstyle{definition}
\newtheorem{definition}{Definition}
\def\E{\mathbb{E}}
\def\Z{\mathbb{Z}}
\def\C{\mathbb{C}}
\def\F{\mathbb{F}}
\def\e{\epsilon}
\newenvironment{proofof}[1]{\indent{\scshape Proof of #1}:~~}{\qed}
\begin{document}

\title[Quantitative structure of stable sets in finite abelian groups]{Quantitative structure of stable sets\\in finite abelian groups}

\author{C. Terry}

\author{J. Wolf}

\address{Department of Mathematics, University of Maryland, College Park,
MD 20742, USA}

\email{cterry@umd.edu}

\address{School of Mathematics, University of Bristol, Bristol BS8 1TW, UK}

\email{julia.wolf@bristol.ac.uk}

\date{}

\begin{abstract}
We prove an arithmetic regularity lemma for stable subsets of finite abelian groups, generalising our previous result for high-dimensional vector spaces over finite fields of prime order. A qualitative version of this generalisation was recently obtained by the first author in joint work with Conant and Pillay, using model-theoretic techniques. In contrast, the approach in the present paper is highly quantitative and relies on several key ingredients from arithmetic combinatorics.
\end{abstract}

\maketitle


\section{Introduction}\label{sec:intro}

Motivated by model-theoretic considerations and the graph-theoretic result \cite{Malliaris:2014go}, the authors proved an arithmetic regularity lemma for stable subsets of high-dimensional vector spaces over a fixed finite field of prime order in \cite{Terry:5CFoQi42}. In order to be able to state it, we recall the definition of a $k$-stable set from \cite{Terry:5CFoQi42}. Throughout this paper, $G$ will be a general finite abelian group unless explicitly stated otherwise.

\begin{definition}[$k$-stable subset]\label{def:stableset}
Let $A\subseteq G$, and let $k\geq 2$ be an integer. Then $A$ is said to have the \emph{$k$-order property} if there exist sequences $a_1,a_2,\dots,a_k$ and $b_1,b_2,\dots,b_k$ in $G$ such that $a_i+b_j\in A$ if and only if $i\leq j$. A set $A\subseteq G$ is said to be $k$-stable if it does not have the $k$-order property.\end{definition}

The following result appeared in \cite{Terry:5CFoQi42}.

\begin{theorem}[Stable arithmetic regularity in vector spaces]\label{thm:mainff}
For any $k\geq 2$, $0<\e<1$, and any prime $p$, there is $m = m(k,\epsilon,p)$ such that the following holds. Suppose that $G:=\mathbb{F}_p^n$ with $n\geq m$, and that $A\subseteq G$ is $k$-stable. Then there is a subspace $H\leqslant G$ of codimension at most $O_k(\epsilon^{-O_k(1)})$ such that for any $g\in G$, either $|(A-g)\cap H|\leq \epsilon |H|$ or $|H\setminus (A-g)|\leq \epsilon|H|$.
\end{theorem}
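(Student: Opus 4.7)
The strategy is to combine a standard (weak) arithmetic regularity lemma for $\F_p^n$ with a stability-based argument that rules out ``medium density'' cosets, thereby avoiding any iteration of regularity (which would be fatal to the polynomial quantitative bound). First I would apply Green's arithmetic regularity lemma to $1_A$ at some parameter $\epsilon_1 = \epsilon_1(k,\epsilon)$ chosen below, obtaining a subspace $H_0 \leq G$ of codimension at most $O(\epsilon_1^{-O(1)})$ on whose cosets $1_A$ is $\epsilon_1$-uniform in $U^2$. Writing $\alpha_g := |A \cap (H_0+g)|/|H_0|$, this means that the non-trivial Fourier coefficients of $(1_A - \alpha_g)|_{H_0+g}$, viewed as a function on $H_0$, are uniformly small.

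The heart of the proof is a counting lemma: if some coset $H_0 + g$ has density $\alpha_g \in (\epsilon, 1-\epsilon)$, then $A$ contains a $k$-order configuration, contradicting $k$-stability. To establish this, I would count tuples $(a_1,\ldots,a_k;b_1,\ldots,b_k) \in (H_0+g)^k \times H_0^k$ with $a_i+b_j \in A$ if and only if $i \leq j$. For fixed $\bar a$, the admissible $b_j$'s are independent in $j$, with $b_j$ ranging over $\bigcap_{i\leq j}(A-a_i) \cap \bigcap_{i>j}(H_0 \setminus (A-a_i))$. A standard Fourier computation shows that, on average over $\bar a \in (H_0+g)^k$, the size of this set is within $O(k \epsilon_1^{1/2})|H_0|$ of its ``random'' value $\alpha_g^j(1-\alpha_g)^{k-j}|H_0|$. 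Multiplying over $j$ and averaging over $\bar a$ yields a count whose main term equals $\alpha_g^{k(k+1)/2}(1-\alpha_g)^{k(k-1)/2}|H_0|^{2k} \geq (\epsilon(1-\epsilon))^{k^2}|H_0|^{2k}$, and this dominates the error provided $\epsilon_1 \leq (\epsilon/k)^{Ck^2}$ for a suitable absolute constant $C$. This produces the forbidden $k$-order configuration and forces $\alpha_g \in [0,\epsilon]\cup[1-\epsilon,1]$ for every $g$. Taking $H := H_0$ then yields the theorem with codimension $O_k(\epsilon^{-O_k(1)})$.

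The main obstacle I anticipate is the quantitative form of the counting lemma. A naive inclusion-exclusion over the $k(k-1)/2$ ``non-edge'' constraints produces $2^{k(k-1)/2}$ error terms, each requiring its own Fourier estimate, which risks an exponential-in-$k$ loss in the translation between $\epsilon_1$ and $\epsilon$. Handling the $b_j$'s one at a time, as sketched above, keeps the number of $U^2$-estimates to $O(k)$ per configuration and gives a polynomial dependence. The conceptual point, and the reason stability is decisive, is that no iterated refinement of $H_0$ is required: a single application of weak regularity suffices because stability immediately forbids the obstructive medium-density cosets, yielding polynomial rather than tower-type codimension bounds.
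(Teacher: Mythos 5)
The proposal rests on the claim that Green's arithmetic regularity lemma supplies a subspace $H_0$ of codimension $O(\epsilon_1^{-O(1)})$ on all of whose cosets $1_A$ is $\epsilon_1$-uniform. Neither half of this claim is correct. Green's regularity lemma produces a subspace whose codimension is tower-type in $\epsilon_1^{-1}$, not polynomial (this is a genuine feature of Szemer\'edi-type regularity, not an artefact of the proof), and it guarantees $\epsilon_1$-uniformity only on all but an $\epsilon_1$-fraction of cosets, not on every coset. The first defect destroys the desired polynomial codimension bound outright. The second is equally fatal: the medium-density coset to which you want to apply the counting lemma may well be one of the exceptional non-uniform ones, in which case your Fourier estimates do not hold there and no $k$-order configuration is produced. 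A common subspace of bounded codimension that is uniform on \emph{every} coset is, after the fact, roughly what the theorem yields for stable $A$, but it is not available a priori as a regularity input; assuming it is makes the argument circular.

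The actual argument in \cite{Terry:5CFoQi42}, whose structure Section \ref{sec:mainproof} of the present paper mirrors with Bohr sets in place of subspaces, embraces the iteration you hoped to avoid but uses stability to bound its \emph{depth}. One argues by contradiction: if no good subspace with polynomial parameters exists, then at each stage one can find a bad translate, split the current structured piece into a part where $A$ is dense and a part where $G\setminus A$ is dense, and invoke a ``stable sets are dense on a translate of a smaller subspace'' lemma (Proposition \ref{prop:theoremC} here; the regular-pair lemma \cite[Lemma 7]{Terry:5CFoQi42} in the $\F_p^n$ case) to continue. Iterating builds a model-theoretic tree, and Theorem \ref{thm:treefact} says a $k$-stable set admits no tree of depth $d(k)$, a constant; since the parameter loss per level is polynomial and the number of levels is $d(k)$, the final codimension is polynomial in $\epsilon^{-1}$. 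Your slogan that ``stability lets us avoid iteration'' is exactly backwards: iteration is where stability acts, and its role is to make the iteration depth a constant depending only on $k$. Separately, even granting the desired regularity input, the counting step glosses over the fact that the sets $S_j(\bar{a})$ for different $j$ share the variable $\bar{a}$, so $\E_{\bar{a}}\prod_j |S_j(\bar{a})|$ is not $\prod_j \E_{\bar{a}}|S_j(\bar{a})|$; this is repairable with a second-moment argument using the $U^2$-control, but it is not the triviality the sketch suggests.
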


For a discussion of the significance of the arithmetic regularity in arithmetic combinatorics, and the relationship between Theorem \ref{thm:mainff} and Green's arithmetic regularity lemma \cite{Green:2005kh}, we refer the reader to the introduction of \cite{Terry:5CFoQi42}.

Theorem \ref{thm:mainff} was built upon by two groups of authors in fairly rapid succession. Alon, Fox and Zhao \cite{Alon:2018va} relaxed the requirement that $A$ be stable to the assumption that its set of translates $\{g+A: g\in G\}$ have bounded VC-dimension. This is a natural extension to seek from a model-theoretic point of view, but the result obtained is necessarily weaker than the stable version in Theorem  \ref{thm:mainff}: one cannot expect uniformity to hold for \emph{all} cosets of $H$ but must make do with a $(1-\e)$-proportion. The main result in \cite{Alon:2018va} is valid in abelian groups of bounded exponent, and we shall not state it explicitly here. 

A result along the same lines was obtained shortly afterwards by Sisask \cite{Sisask:2018wz}, using an entirely different set of techniques. His regularity lemma for sets of bounded VC-dimension (in the sense defined above) is valid in all finite abelian groups. We shall discuss the main ingredient of his proof in more detail in Section \ref{sec:almostper}. 

In a general finite abelian group $G$, which may not contain any non-trivial proper subgroups (such as the cyclic group $\Z/p\Z$ with $p$ a prime), we need to replace the subspace in the statement of Theorem \ref{thm:mainff} with a so-called \emph{Bohr set}. Recall that given a set $K\subseteq \widehat{G}$ of characters on $G$ and any $\rho>0$, a Bohr set $B$ of width $\rho$ and frequency set $K$ is defined as
\begin{equation*}
B(K,\rho):=\{x\in G: |\gamma(x)-1|\leq \rho \text{ for all $\gamma\in K$}\}.
\end{equation*}
The size $|K|$ of the frequency set is often referred to as the \emph{rank} of the Bohr set. It is easy to see that when $G:=\F_p^n$ and $0<\rho<1/p$, a Bohr set $B(K,\rho)$ is simply a subspace of $\F_p^n$ of codimension at most $|K|$. In the more general setting, Bohr sets act as approximate subgroups in the sense that they are somewhat closed under addition (see Section \ref{sec:regBohr}). 

Shortly after the preprint \cite{Terry:5CFoQi42} appeared, the first author, in collaboration with Conant and Pillay \cite{Conant:2017uw}, obtained the following qualitative arithmetic regularity lemma for stable subsets of general finite groups.

\begin{theorem}[Conant-Pillay-T., 2017+]\label{thm:CPT}
For any $k\geq 1$ and $0<\e<1$, there are $n=n(k,\e)$ and $N=N(k,\e)$ such that the following holds. Suppose that $G$ is a finite group of order at least $N$, and that $A\subseteq G$ is $k$-stable. Then there is a normal subgroup $H \leqslant G$ of index at most $n$ such that for each $g\in G$, either $|gH\cap A|\leq \e|H|$ or $|gH\setminus A|\leq \e|H|$.
\end{theorem}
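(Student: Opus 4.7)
The plan is to proceed by a model-theoretic compactness argument via ultraproducts, broadly in line with the approach one expects Conant, Pillay, and the first author to have taken. Suppose for contradiction that the theorem fails for some fixed $k$ and $\e$: there exist finite groups $(G_i)_{i \in \N}$ with $|G_i| \to \infty$ and $k$-stable subsets $A_i \subseteq G_i$ for which no normal subgroup of uniformly bounded index yields the claimed coset dichotomy. Fix a non-principal ultrafilter $\calU$ on $\N$ and form the ultraproduct $G^* := \prod_i G_i / \calU$ with internal subset $A^* := \prod_i A_i / \calU$. By {\L}o\'{s}'s theorem, $A^*$ is $k$-stable in $G^*$, and the normalised counting measures descend to a bi-invariant Keisler measure $\mu$ on the definable subsets of $G^*$ given by $\mu(X) = \lim_{\calU} |X_i|/|G_i|$.

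The next step is to invoke local stability applied to the formula $\phi(x,y) := x \cdot y \in A^*$, which is stable because $A^*$ is $k$-stable. A central consequence of local stability, in the presence of a $\phi$-invariant Keisler measure, is that the family of left-translates $\{gA^* : g \in G^*\}$ has only \emph{finitely} many $\mu$-equivalence classes under $gA^* \sim hA^* \iff \mu(gA^* \triangle hA^*) = 0$. Furthermore, definability of $\phi$-types together with the finiteness of the local $\phi$-rank allow one to witness this equivalence relation by a single first-order formula, so that each class is a definable set.

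Accordingly I would define the measure-stabilizer
\begin{equation*}
S := \{ g \in G^* : \mu(gA^* \triangle A^*) = 0 \},
\end{equation*}
which by the previous paragraph has finite index in $G^*$ and is itself definable. Passing to its normal core $H := \bigcap_{g \in G^*} g S g^{-1}$ (an intersection over only finitely many distinct conjugates) yields a definable, finite-index, normal subgroup of $G^*$. By construction $A^*$ differs $\mu$-negligibly from a union of $H$-cosets, so for every $g \in G^*$ the coset $gH$ is $\mu$-almost-entirely inside $A^*$ or $\mu$-almost-entirely inside its complement. A first-order sentence expressing this coset dichotomy with error $\e$ then transfers via {\L}o\'{s}'s theorem back to all but $\calU$-negligibly many $G_i$, yielding a subgroup of bounded index with the desired property and contradicting the assumed failure.

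The main obstacle, and the place where stability is genuinely used, is obtaining \emph{finite} index for the measure-stabilizer $S$: this is precisely what fails in the VC-dimension setting of Alon--Fox--Zhao \cite{Alon:2018va} and Sisask \cite{Sisask:2018wz}, where one only recovers a dichotomy on a $(1-\e)$-proportion of cosets. The passage to the normal core and the ultraproduct transfer are essentially formal once finite index is in hand; the deep input throughout is the stability-theoretic boundedness of the number of measure-classes of translates, which in turn hinges on definability of $\phi$-types for stable $\phi$.
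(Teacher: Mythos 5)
The paper does not prove Theorem \ref{thm:CPT}; it states it as background, cited from \cite{Conant:2017uw}, and its only comment on the cited proof is that it ``relies entirely on model-theoretic techniques, specifically the localisation of existing results by Hrushovski and Pillay \cite{Hrushovski:1994fn} in stable group theory.'' Your ultraproduct-and-Keisler-measure sketch is broadly consistent with that one-line description, so there is no conflict, but there is also nothing in this source to check it against. What the present paper actually does is deliberately orthogonal to your route: Theorems \ref{thm:main} and \ref{thm:maingen2} are \emph{quantitative} versions of Theorem \ref{thm:CPT} in the abelian case, proved with no compactness at all. Relative Croot--Sisask $L^\infty$ almost-periodicity for bounded VC-dimension (Propositions \ref{prop:propositionB}--\ref{prop:theoremC}) shows a stable set is dense on a translate of a Bohr set of controlled rank and width, and the tree-building argument of Section \ref{sec:mainproof} manufactures a witness of $k$-instability whenever no sufficiently good Bohr set exists. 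The payoff is explicit bounds --- index $\exp(\mu^{-O_k(1)})$ in Theorem \ref{thm:maingen2} --- which an ultraproduct/compactness argument structurally cannot deliver, since it only yields the existence of $n(k,\e)$ with no estimate. That trade-off is the entire point of the paper.

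On the internals of your sketch: the place where the substance lives is the assertion that the measure-stabiliser $S=\{g:\mu(gA^*\triangle A^*)=0\}$ is a \emph{definable} subgroup of \emph{finite} index, which you present as ``a central consequence of local stability'' in the presence of an invariant measure. That framing is backwards. The Hrushovski--Pillay localisation that the paper alludes to produces the relatively $\phi$-definable connected component $G^{00}_\phi$ (for $\phi(x,y):=x\cdot y\in A^*$) of finite index \emph{first}, using finiteness of the local $\phi$-ranks, and then deduces that $A^*$ is a $\mu$-almost union of its cosets; the fact that there are finitely many $\mu$-classes of translates is a downstream corollary of that construction, not a black-box input one can invoke before it. Your sketch compresses exactly this step to a sentence, which is where a referee of \cite{Conant:2017uw} would want the work shown. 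The normal-core step and the L\'os transfer are, as you say, routine once a definable finite-index normal subgroup is in hand.
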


The proof of Theorem \ref{thm:CPT} in \cite{Conant:2017uw} relies entirely on model-theoretic techniques, specifically the ``localisation" of existing results by Hrushovsky and Pillay \cite{Hrushovski:1994fn} in stable group theory. The authors also obtained a stronger structural result than that proved in \cite{Terry:5CFoQi42} (see the discussion in Section \ref{sec:struc}). In a separate paper \cite{Conant:2018tx}, Conant, Pillay and Terry also proved a result for subsets of finite groups whose left-translates have bounded VC-dimension. In this setting the model-theoretic techniques were less well developed, and established in the companion paper \cite{Conant:2018tt}.

The main result we obtain in this paper is a quantitative version of Theorem \ref{thm:CPT} in the abelian case. 

\begin{theorem}[Main result]\label{thm:main}
For all $k\geq 2$ and $0<\e<1$, there is $N_0 = N_0(k,\epsilon)$ such that the following holds. Suppose that $G$ is a finite abelian group of order at least $N_0$, and that $A\subseteq G$ is $k$-stable. Then there exists a Bohr set $B$ of width at least $\epsilon^{O_k(1)}$ and rank at most $\epsilon^{-O_k(1)}$ such that for any $g\in G$, either $|(A-g)\cap B|\leq \epsilon |B|$ or $|B\setminus (A-g)|\leq \epsilon|B|$. 
\end{theorem}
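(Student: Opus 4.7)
The plan is to mirror the architecture of the proof of Theorem~\ref{thm:mainff} in~\cite{Terry:5CFoQi42}, replacing every use of a subspace by a (regular) Bohr set. Two new ingredients are needed: a Bohr-set analogue of the stability-based ``key'' and ``good'' lemmas from~\cite{Terry:5CFoQi42}, and Sisask's quantitative almost-periodicity machinery (discussed in Section~\ref{sec:almostper}) to substitute for the clean Fourier/subspace structure available over $\F_p^n$.

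\textbf{Step 1 (Setup).} Fix $k\geq 2$, $0<\e<1$, and a $k$-stable $A\subseteq G$. Throughout I would work with regular Bohr sets $B=B(K,\rho)$, so that standard approximate-subgroup manipulations (``$|B+B'|\leq C|B|$ for $B'$ a small sub-Bohr set'' and ``$|B\setminus(B+t)|\ll |B|$ for $t$ in a smaller dilate'') are available with controlled loss in rank and radius. All quantitative bookkeeping is kept polynomial in $\e$ by allowing only $O_k(1)$ nested refinements.

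\textbf{Step 2 (Stability key lemma).} Given a regular Bohr set $B$, set $f_B(g):=|A\cap(g+B)|/|B|$. The main combinatorial claim is that $k$-stability of $A$ prevents $f_B$ from taking more than $\e^{-O_k(1)}$ essentially distinct values in the interval $[\e,1-\e]$: after pigeonholing, the set $\{g:f_B(g)\in[\e,1-\e]\}$ is covered by a bounded number of ``levels'' on which $f_B$ is $\e$-close to constant. The argument is by contradiction, extracting from many well-separated intermediate densities an alternating sequence $(a_i,b_j)$ with $a_i+b_j\in A$ iff $i\leq j$, by choosing $a_i$ inside and outside the intersections $A\cap(g_i+B)$ and $b_j$ inside a small sub-Bohr set. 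This adapts the vector space argument of~\cite{Terry:5CFoQi42} but requires extra care, since each shift by an element of $B$ is only approximate; the errors are absorbed by working inside a sub-Bohr set $B''\subseteq B'\subseteq B$ of comparable rank.

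\textbf{Step 3 (Almost-periodicity and dichotomy).} Apply Sisask's quantitative almost-periodicity theorem to $1_A$ to obtain a regular Bohr set $B$ of rank $\e^{-O_k(1)}$ and width $\e^{O_k(1)}$, and a sub-Bohr set $B'\subseteq B$ of comparable parameters, such that $1_A*\mu_B$ is $\e^{O_k(1)}$-close in $L^\infty$ to a function invariant under $B'$-translation. Equivalently, $f_B$ is essentially locally constant at scale $B'$. Combining this with Step~2 yields the good lemma: after replacing $B$ by $B'$, $f_B$ takes a bounded number of essentially distinct values, each of which is stable under $B'$-shifts. One final appeal to $k$-stability rules out intermediate values: a coset with $f_B(g_0)\in[\e,1-\e]$ would, via almost-periodicity, produce an entire $B'$-translate of $g_0$ of similarly intermediate density, and iterated stability-based pigeonholing over $k$ steps again extracts a $k$-order configuration, a contradiction. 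Hence $f_B(g)\leq\e$ or $f_B(g)\geq 1-\e$ for every $g$, which is the desired conclusion.

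\textbf{Main obstacle.} I expect the hardest part to be Step~2, the stability key lemma in the Bohr-set setting. Over $\F_p^n$ the shift-by-$H$ relation is exact, so order sequences can be read off directly from coset intersections. For Bohr sets the additive closure is only approximate, and each of the $k$ alternations accrues a regularity error; balancing these errors across iteratively nested sub-Bohr sets while preserving polynomial-in-$\e$ dependence in the final rank and width is the core quantitative challenge, and is precisely what distinguishes the present argument from the qualitative model-theoretic proof of~\cite{Conant:2017uw}.
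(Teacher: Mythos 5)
Your high-level plan (Bohr sets in place of subspaces, Sisask-type almost-periodicity, stability to block iterated refinement) contains the right raw ingredients, but the way you assemble them diverges from the paper in two substantive respects, and I think there is a genuine gap. First, the engine of the paper's proof is not a bound on the number of intermediate density levels (your Step~2), nor ``local constancy of $1_A*\mu_B$'' (your Step~3), but the single clean statement (Propositions~\ref{prop:theoremA} and~\ref{prop:theoremC}): if a stable $A$ has density at least $\e$ on a Bohr set $B$, then $A$ has density at least $1-\e$ on some \emph{translate} of a sub-Bohr set $B'\subseteq B$ with polynomially controlled rank increase and width loss. That translate-and-densify step, applied twice at each node (once to $A$, once to its complement), is what drives the inductive construction of a depth-$d$ tree in Theorem~\ref{thm:mainthmgen}; Theorem~\ref{thm:treefact} then gives the contradiction with $k$-stability. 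Your proposal never isolates this lemma, and your Step~3 asserts something Sisask's theorem does not directly give: his $L^\infty$ almost-periodicity controls $\mu_A*1_{-A}$, not $1_A*\mu_B$, and converting it into the density conclusion requires the additional maneuver in Proposition~\ref{prop:propositionB} (passing through the Chang--Sanders spectral lemma as in Schoen--Sisask, applied to a translate $T-z\subseteq B-z$ of a dense subset of $B$, and then averaging $\mu_A*\mu_{B'}*1_{-A}(0)$ to extract a heavy fibre $1_A*\mu_{B'}(a)\geq 1-\eta$).

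Second, you locate the main obstacle in the wrong place. You flag the stability extraction in the Bohr-set setting (your Step~2) as the core quantitative challenge, but in fact the tree construction in Section~\ref{sec:mainproof} adapts from~\cite{Terry:5CFoQi42} almost verbatim once one has the density lemma; the bookkeeping of $\ell(t)=(Dm\log_\bullet(r^{-1}))^{6^t}$ across $d(k)$ levels is tedious but mechanical. What genuinely fails in the transfer from $\F_p^n$ is the proof of the density input itself: the graph-regularity argument of~\cite[Lemma~7]{Terry:5CFoQi42} (that a pair of subspaces induces a regular pair in the Cayley graph) has no Bohr-set analogue, and replacing it by almost-periodicity for bounded VC-dimension is precisely the new idea of this paper. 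So the ``stability key lemma'' you are worried about is the routine part, and the almost-periodicity step you treat as a plug-in is where the real work lives; as written, your Step~3 would not compile into a proof without reproving Proposition~\ref{prop:propositionB}.
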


The paper is organised as follows. In Section \ref{sec:regBohr} we give a concise introduction to the analytic techniques underlying the proof. These are entirely standard in arithmetic combinatorics, allowing many arguments in the toy setting of $\F_p^n$ to be transferred to general finite abelian groups via a process known as \emph{Bourgainisation}. However, a key part of the argument in \cite{Terry:5CFoQi42} does not yield to generalisation in this way: it does not seem possible to show that a pair of Bohr sets gives rise to a sufficiently ``regular pair" in the relevant Cayley graph \cite[Lemma 7]{Terry:5CFoQi42}.

As a work-around for this situation we use a refinement of a recent almost-periodicity result for bounded VC-dimension by Sisask \cite{Sisask:2018wz}. In Section \ref{sec:almostper} we give some background on the versatile Croot-Sisask almost-periodicity technique \cite{Croot:2010fk}, and establish the main ingredient in our proof, namely the fact that stable sets are dense on some translate of a Bohr set with suitably bounded parameters. In Section \ref{sec:mainproof} we use this result to build a model-theoretic tree that witnesses instability in the absence of efficient regularity, by carrying out an approximate version of the argument in \cite{Terry:5CFoQi42} (and ultimately \cite{Malliaris:2014go}). This will conclude the proof of Theorem \ref{thm:main}.

Together with a simple lemma given at the start of Section \ref{sec:struc}, Theorem \ref{thm:main} allows us to deduce a structural corollary for two classes of finite abelian groups that are of particular interest in arithmetic combinatorics. First, we recover \cite[Corollary 1]{Terry:5CFoQi42}, which states, roughly speaking, that stable subsets of $\F_p^n$ look approximately like a union of cosets of a subspace of bounded codimension. Second, we find that there are essentially no non-trivial stable sets in cyclic groups of prime order, providing a model-theoretic justification for the common experience in arithmetic combinatorics that these cyclic groups are substantially more difficult to handle.

In order to establish a structural result for general finite abelian groups (Corollary \ref{cor:gengroup} in Section \ref{sec:boot}), we bootstrap the existing argument to conclude that when $A$ is stable, efficient regularity holds not only with respect to a Bohr set as in Theorem \ref{thm:main}, but with respect to a subgroup of bounded index. 

\begin{theorem}[Main result: subgroup version]\label{thm:mainintro2}
For all $k\geq 2$ and $\mu\in (0,1)$, there is $M_0=M_0(k,\mu)$ such that the following holds. Suppose that $G$ is a finite abelian group of order at least $M_0$, and that $A\subseteq G$ is $k$-stable. Then there is a subgroup $H\leq G$ of index at most $\exp(\mu^{-O_k(1)})$ such that for any $g\in G$, either $|(A-g)\cap H|\leq \mu |H|$ or $|H\setminus (A-g)|\leq \mu|H|$.
\end{theorem}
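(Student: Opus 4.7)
The strategy is to bootstrap Theorem~\ref{thm:main} so as to replace its Bohr-set conclusion with a subgroup conclusion. First I would apply Theorem~\ref{thm:main} with parameter $\epsilon := \mu^{c_k}$ for a suitable constant $c_k = c_k(k) > 0$, obtaining a Bohr set $B = B(K,\rho)$ of rank $|K| \leq \epsilon^{-O_k(1)}$ and width $\rho \geq \epsilon^{O_k(1)}$ on which every translate of $A$ is $\epsilon$-regular. The natural subgroup candidate is the annihilator $H := \bigcap_{\chi \in K} \ker(\chi)$, which lies inside $B(K,\rho')$ for every positive $\rho'$. Since cosets of $H$ refine translates of $B$, the $B$-regularity of $A$ transfers to $H$-regularity with a controlled loss, yielding the required $\mu$-regularity provided $c_k$ was chosen large enough relative to the ratio $|B|/|H|$.

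The main obstacle is bounding the index $[G:H] = |\langle K \rangle|$ by $\exp(\mu^{-O_k(1)})$: a priori this product of character orders is uncontrolled in finite abelian groups with large character orders, for instance $\Z/p\Z$ with $p$ a large prime, where the naive bound $[G:H] \leq \prod_{\chi \in K} \operatorname{ord}(\chi)$ degenerates. The resolution uses the stability of $A$ a second time. If some $\chi \in K$ has order much larger than a specified threshold $T = \exp(\mu^{-O_k(1)})^{1/|K|}$, then the $\epsilon$-regularity of $A$ on translates of $B$ allows one to locate $k$ elements of $G$ on which $\chi$ takes well-separated values; a quantitative tree construction in the spirit of Section~\ref{sec:mainproof} then produces an order configuration $(a_i, b_j)$ in the Cayley graph of $A$, contradicting $k$-stability. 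Hence every $\chi \in K$ has order at most $T$, and multiplying over $K$ gives the desired index bound.

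The hardest part will be this second application of stability: unlike the original tree argument, we must now ensure that the sums $a_i + b_j$ land in the correct regularised coset of $B$ as dictated by the character values, which requires carefully coupling the almost-periodicity input from Section~\ref{sec:almostper} with the character structure on $G$. This is where the exponential dependence $\exp(\mu^{-O_k(1)})$ enters the final bound, reflecting the fact that each of the $|K| \leq \mu^{-O_k(1)}$ characters must be allowed to contribute a factor of up to $T$ to the total index before the instability argument forces it to be trivial. The conclusion then follows by combining $H$-regularity with the index bound.
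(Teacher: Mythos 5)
Your proposal contains a genuine gap that makes the overall strategy unsound, and it is instructive to compare it with what the paper actually does.

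The fundamental problem is the choice of subgroup $H := \bigcap_{\chi\in K}\ker(\chi)$, the annihilator of the frequency set. This is the \emph{smallest} subgroup contained in $B(K,\rho)$, and in a general finite abelian group its index is not controlled by the parameters of the Bohr set at all: in $\Z/p\Z$ with $p$ a large prime, a single nontrivial $\chi\in K$ already forces $H=\{0\}$, so $[G:H]=p$. You correctly identify this obstruction, but the proposed remedy does not work. You claim that if some $\chi\in K$ has large order then a second tree argument produces an instance of the order property; however, nothing about the Bohr set returned by Theorem~\ref{thm:main} ties its frequency set to the structure of $A$ in a way that would force such a contradiction. The frequency set $K$ arises from the Croot--Sisask/Chang--Sanders machinery in Section~\ref{sec:almostper} and is neither unique nor canonical; in particular a good Bohr set may well have a high-order character in $K$ that is ``redundant'', or that simply reflects the ambient group structure rather than any property of $A$. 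Indeed, the correct statement in $\Z/p\Z$ is not that $K$ avoids high-order characters (all nontrivial characters there have order $p$), but that one may take $K=\emptyset$ because stable sets in $\Z/p\Z$ are essentially trivial --- and proving \emph{that} already requires the machinery you are trying to bypass (see Corollary~\ref{cor:zp}, which itself rests on Lemma~\ref{lem:strucI}). There is also a secondary issue in your first paragraph: even if $[B:H]$ were bounded, $\epsilon$-goodness on translates of $B$ does not transfer to cosets of $H$ with loss merely $|B|/|H|$, because $\epsilon|B|$ elements of $A$ could all lie in a single $H$-coset inside $B+g$; fixing this by shrinking $\epsilon$ makes the parameters circular.

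The paper resolves the difficulty in the opposite direction: instead of the annihilator, it works with $\langle B'\rangle$, the subgroup \emph{generated} by a sub-Bohr-set $B'\prec_\epsilon B$. This subgroup is large rather than small --- by Lemma~\ref{lem:sizefact} its index is at most $(\rho')^{-|K'|}$, so it is automatically bounded. The price is that $\langle B'\rangle$ contains $B$ rather than being contained in it, so regularity does not transfer downward and one cannot conclude directly. Two additional ingredients are needed: Lemma~\ref{lem:strucI}, which shows that the set $I$ of almost-full translates of $B$ is a union of cosets of $\langle B'\rangle$ (this is the genuine structural insight, relying on the regularity of the Bohr set and the dichotomy in $\epsilon$-goodness); and a full re-run of the tree argument (Theorem~\ref{thm:maingen2}), with Proposition~\ref{prop:densesubgroups} (stable sets are dense on subgroups of bounded index) replacing Proposition~\ref{prop:theoremC} at each inductive step. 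I would encourage you to study how Lemma~\ref{lem:strucI} propagates $\epsilon$-goodness across additions of elements of $B'$ --- that is the step that makes the index bound come for free via the size of $B'$, and it has no analogue in the annihilator approach.
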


It is likely that a more direct proof of Theorem \ref{thm:mainintro2} avoiding this bootstrapping can be found. Having said this, the fact that we run the tree construction a second time does not affect the quantitative outcome of the argument, which is thus efficient in that sense.

\vspace{8pt}

\textbf{Acknowledgements.} The authors wish to thank the Simons Institute for the Theory of Computing at UC Berkeley for providing outstanding working conditions and a springboard for the authors' collaboration during the \emph{Pseudorandomness} programme in Spring 2017. The second author is grateful to Tim Gowers for sharing a helpful example with her.

\section{Analytic background}\label{sec:regBohr}

Throughout, let $G$ be a finite abelian group. As is common in arithmetic combinatorics, given a subset $B\subseteq G$ we define its \emph{characteristic measure} $\mu_B$ by $\mu_B(x):=1_B(x)|G|/|B|$ for all $x\in G$, where $1_B$ is the indicator function of $B$. Note that the normalisation is chosen so that $\E_{x \in G} \mu_B(x)=1$, where $\E_{x \in G}$ denotes the sum over all elements $x\in G$, normalised by the order of $G$. 

Let $\widehat{G}$ denote the group of characters of $G$, that is, the set of homomorphisms from $G$ into the multiplicative group on $\mathbb{T}=\{z\in \mathbb{C}: |z|=1\}$.  Given $f:G\rightarrow \mathbb{C}$ and $\gamma\in \widehat{G}$, the \emph{Fourier coefficient of $f$ at $\gamma$} is defined to be
\[\widehat{f}(\gamma)=\mathbb{E}_{x\in G} f(x)\gamma(x).\]
Given a set $X\subseteq G$, we denote by $\mathrm{Spec}_{\rho}(\mu_X)$ the set of characters $\gamma \in \widehat{G}$ at which $|\widehat{\mu_X}(\gamma)|\geq \rho$. It follows straight from the orthonormality of the characters that the \emph{convolution} $f*g:G\rightarrow\C$ of two functions $f,g:G\rightarrow\C$, defined by
\[f*g(x):=\E_{y\in G}f(y)g(x-y),\]
satisfies the relationship 
\[\widehat{f*g}(\gamma)=\widehat{f}(\gamma)\cdot\widehat{g}(\gamma)\]
for all $\gamma\in\widehat{G}$. These facts, together with the inversion formula $f(x)=\sum_{\gamma\in \widehat{G}}\widehat{f}(\gamma)\gamma(x)$ and Parseval's identity\footnote{Here the $\ell^2$ norms are defined with respect to normalised counting and counting measure on $G$ and $\widehat{G}$, respectively.} $\|f\|_2=\|\widehat{f}\|_2$, will come in useful in the proof of Proposition \ref{prop:propositionB}.
 
In contrast to \cite{Terry:5CFoQi42}, we only require a very limited amount of Fourier analysis in the present paper. We therefore content ourselves with simply stating that the Fourier transform of the function $f(x):=1_A(x)-(|A|/|G|)$ measures the extent to which $A$ is uniformly distributed on $G$. Moreover, by suitably adapting the function $f$ (see \cite[Definition 4]{Terry:5CFoQi42}), the Fourier transform can more generally provide a measure of how uniformly distributed $A$ is on translates of a subgroup of $G$.

The goal of this paper is to show that when $A$ is $k$-stable for some $k\geq 2$, then we can find a subgroup $B$ whose parameters depend only on $k$ and $\epsilon$ such that $A$ is well-distributed on every coset of $B$. In fact, we shall establish the existence of such a subgroup $B$ with an even stronger property, which we characterise as follows.

\begin{definition}[$\epsilon$-good]\label{def:good}
Let $A, B\subseteq G$, and let $y\in G$. We say that $y$ is \emph{$\epsilon$-good for $A$ with respect to $B$} if $|(A-y)\cap B|\leq \epsilon |B|$ or $|B\setminus (A-y)|\leq \epsilon |B|$. We say that $B$ is \emph{$\epsilon$-good for $A$} if $y$ is $\epsilon$-good for $A$ with respect to $B$ for all $y\in G$.
\end{definition}

Thus, when $B$ is $\e$-good for $A$ for some suitably small parameter $\e>0$, then $A$ is well distributed on every translate of $B$ in the sense that it either misses or fills this translate almost entirely.

As mentioned in the introduction, the cyclic groups $\Z/p\Z$ with $p$ a prime show that not every finite abelian group disposes of a non-trivial proper subgroup. In such cases we shall have to make do with a so-called Bohr set, which can replace a subgroup in many analytic arguments, using a set of techniques that is now standard in arithmetic combinatorics. Indeed, Green \cite{Green:2005kh} first proved his arithmetic regularity lemma in $\F_2^n$, and then followed the blueprint of this argument to prove the result in the more general context of finite abelian groups. We shall now give a summary of the main tools that enable this generalisation.\footnote{The field has evolved significantly since \cite{Green:2005kh}, and we shall adopt a more modern approach here.}  

Given $K\subseteq \widehat{G}$ and $\rho\in [0,1]$, define a \emph{Bohr set} $B(K,\rho)$ by
\begin{equation}\label{eq:defbohr}
B(K,\rho):=\{x\in G: |\gamma(x)-1|\leq \rho \text{ for all $\gamma\in K$}\}.
\end{equation}
We call $\rho$ the \emph{width} of $B=B(K,\rho)$, and refer to $|K|$ as the \emph{rank} of $B$. Observe that a Bohr set is always symmetric, i.e. if $B$ is a Bohr set then $x\in B$ if and only if $-x\in B$. It is easy to see that a Bohr set generalises the notion of a subspace of a finite-dimensional vector space over a finite field.  Indeed, if $G=\mathbb{F}_p^n$ for some prime $p$ and if $\rho<1/p$, then $B(K,\rho)$ is a subspace of $G$ whose orthogonal complement is spanned by $K$, and which is thus of codimension at most $|K|$. 

The following standard result, for a proof of which we refer the reader to \cite[Lemma 4.20]{Tao:2010rq}, states that Bohr sets of constant width and rank have positive density in $G$.
\begin{lemma}[Size of Bohr sets]\label{lem:sizefact}
For any $K\subseteq \widehat{G}$ and $\rho\in (0,1)$, $|B(K,\rho)|\geq \rho^{|K|}|G|$.
\end{lemma}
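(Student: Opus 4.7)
The plan is to proceed via a pigeonhole argument applied to the simultaneous character embedding $\phi: G \to \mathbb{T}^{|K|}$ given by $\phi(x) = (\gamma(x))_{\gamma \in K}$. The key observation is that $B(K,\rho)$ coincides with $\phi^{-1}$ of the polydisc of points within chord-distance $\rho$ of $(1, \dots, 1)$, and that differences interact nicely with $\phi$: for every character $\gamma$ one has $|\gamma(x - y) - 1| = |\gamma(x)\overline{\gamma(y)} - 1| = |\gamma(x) - \gamma(y)|$, using $|\gamma(y)| = 1$.

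First, I would partition each copy of $\mathbb{T}$ in $\mathbb{T}^{|K|}$ into at most $1/\rho$ arcs of chord-diameter at most $\rho$ (absorbing absolute constants into $\rho$ as is standard in this setting). Taking products yields a partition of $\mathbb{T}^{|K|}$ into at most $\rho^{-|K|}$ cells, each of which has the property that any two points in it agree in every coordinate up to an error of $\rho$.

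Next, the fibres of $\phi$ over the cells partition $G$, so by the pigeonhole principle at least one cell $C$ satisfies $|\phi^{-1}(C)| \geq \rho^{|K|}|G|$. Fix any $x_0 \in \phi^{-1}(C)$. Then for every $x \in \phi^{-1}(C)$ and every $\gamma \in K$, the identity above yields
\[
|\gamma(x - x_0) - 1| = |\gamma(x) - \gamma(x_0)| \leq \rho,
\]
so $x - x_0 \in B(K,\rho)$. The injection $x \mapsto x - x_0$ therefore embeds $\phi^{-1}(C)$ into $B(K,\rho)$, giving $|B(K,\rho)| \geq \rho^{|K|}|G|$.

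The only mildly delicate step is the geometric partition of $\mathbb{T}$: a naive arc-length partition into $\lceil 2\pi/\rho \rceil$ arcs gives the bound with an extra multiplicative constant, so obtaining the clean statement $\rho^{|K|}|G|$ requires either a slightly sharper partition or absorbing the absolute constants into $\rho$ (or equivalently, into the definition of the Bohr set). Either way, this step is entirely standard and causes no difficulty for the applications to follow.
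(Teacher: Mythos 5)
Your pigeonhole argument via the embedding $x\mapsto(\gamma(x))_{\gamma\in K}\in\mathbb{T}^{|K|}$ is the standard proof. The paper does not prove the lemma itself but defers to \cite[Lemma 4.20]{Tao:2010rq}, where the same idea appears with a random-shift (averaging) refinement of your partition: for each $\gamma\in K$ one takes an arc of the appropriate length with a uniformly random starting point, observes that a uniformly random $x\in G$ lands in all $|K|$ arcs simultaneously with exactly the right probability, and then differences within the resulting popular fibre. This sidesteps the $\lceil 1/\rho\rceil$-versus-$1/\rho$ rounding you flag, but otherwise matches your argument.

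You are also right that the absolute constant does not come out clean, and in fact with the chord-metric condition $|\gamma(x)-1|\le\rho$ used in this paper it cannot: already for $G=\Z/N\Z$ and $|K|=1$ one has $|B(K,\rho)|\sim \rho N/\pi$, so the displayed inequality is off by a constant per frequency. The clean exponent in the cited reference is proved for the arc-length convention $\|\arg\gamma(x)/2\pi\|_{\R/\Z}\le\rho$, and converting to the chord convention costs an absolute constant factor in $\rho$. Since $\rho$ enters the rest of the paper only through $\ll$ and $\gg$ bounds, this is harmless, and your proof, read with the constant absorbed into $\rho$ as you suggest, serves every application made of the lemma.
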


In order to be able to use a Bohr set as a substitute for a genuine subgroup, it needs to have a special property, known as \emph{regularity}.\footnote{This notion of regularity is unrelated to that making an appearance in the term ``regularity lemma".} Most of the definitions and results that follow go back to Bourgain \cite{Bourgain:1999vn}, and a good reference is \cite[Chapter 4]{Tao:2010rq} or \cite[Section 2]{Gowers:2011uq}. 

A Bohr set $B=B(K,\rho)$ is said to be \emph{regular} if for all $\epsilon \in (0,1/100|K|)$, 
\[|B(K,\rho(1+\epsilon))|\leq |B|(1+100|K|\epsilon) \text{ and }|B(K,\rho(1-\epsilon))|\geq |B|(1-100|K|\epsilon) .\]
The plentiful existence of regular Bohr sets was first noted by Bourgain \cite{Bourgain:1999vn}, see for instance \cite[Lemma 4.25]{Tao:2010rq}.
\begin{lemma}[Regular Bohr sets exist]\label{lem:regexist}
Given $K\subseteq \widehat{G}$ and $\rho>0$, there is $\rho'\in [\rho, 2\rho]$ such that $B(K,\rho')$ is regular.
\end{lemma}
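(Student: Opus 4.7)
The plan is to follow Bourgain's classical averaging argument. The key analytic input is the standard doubling estimate for Bohr sets,
\[
|B(K, 2\rho)| \leq C^{|K|}\, |B(K, \rho)|
\]
for some absolute constant $C$ (one can take $C=4$), which can be proved by a greedy covering argument in the ambient torus $\mathbb{T}^{|K|}$ via the natural map $x \mapsto (\gamma(x))_{\gamma \in K}$, and is of the same flavour as the lower bound in Lemma \ref{lem:sizefact}.

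With this in hand, I would work with the monotone non-decreasing function $F : [1, 2] \to \mathbb{R}$ defined by $F(s) := \log |B(K, \rho s)|$, whose total increment satisfies $F(2) - F(1) \leq |K| \log C$ by the doubling bound. I would then argue by contradiction. If no $\rho' \in [\rho, 2\rho]$ makes $B(K, \rho')$ regular, then for every $s_0 \in [1, 2]$ there exists $\epsilon = \epsilon(s_0) \in (0, 1/100|K|)$ witnessing non-regularity, which means either $F(s_0(1+\epsilon)) - F(s_0) > \log(1 + 100|K|\epsilon)$ or $F(s_0) - F(s_0(1-\epsilon)) > -\log(1 - 100|K|\epsilon)$; in either case $F$ increases by approximately $100 |K|\epsilon$ on a subinterval of relative width $\epsilon$ around $s_0$. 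The resulting family of intervals covers $[1,2]$, so applying the Vitali covering lemma I would extract a disjoint subfamily of total Lebesgue measure $\Omega(1)$; summing the local growths of $F$ over these disjoint intervals then produces a total increment of $F$ on $[1, 2]$ of size $\Omega(|K|)$, contradicting the doubling bound once the constants are tracked.

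The main obstacle is purely the constant-chasing in the final step: one needs the factor $100$ appearing in the definition of regularity to dominate $\log C$ coming from the doubling estimate, so that the Vitali argument produces a genuine quantitative contradiction (with $C = 4$ and the standard 5-fold Vitali lemma, one obtains a total increment of roughly $5|K|$, comfortably exceeding $|K|\log 4$). This is a matter of bookkeeping rather than of mathematical substance, and explains why the numerical factor $100$ in the definition of regularity is chosen to be as large as it is.
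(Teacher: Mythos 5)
Your argument is correct, and it is essentially the same proof as in the paper's cited reference (Tao--Vu, Lemma 4.25): the paper gives no proof of its own, and the standard one proceeds exactly as you describe, via the doubling estimate $|B(K,2\rho)| \leq C^{|K|}|B(K,\rho)|$ for an absolute constant $C$ together with a covering/pigeonhole argument on the monotone function $s \mapsto \log|B(K,\rho s)|$. Your Vitali formulation of the pigeonhole step is a clean way to organize the contradiction, and your closing remark about the role of the constant $100$ in the definition of regularity (namely that it must dominate $\log C$ once the Vitali loss is accounted for) correctly identifies the only place where care is needed.
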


For notational convenience we also use the following (non-standard) definitions from \cite{Gowers:2011uq}.

\begin{definition}[Closure of a regular Bohr set]\label{def:bohrint}
Given $\epsilon>0$ and Bohr sets $B=B(K,\rho)$, $B'=B(K,\sigma)$, we write $B'\prec_{\epsilon}B$ if $B$ and $B'$ are both regular, and $\sigma \in [\epsilon \rho/400|K|, \epsilon \rho/200|K|]$.  In this case we write $B^+:=B(K,\rho +\sigma)$ for the \emph{closure} of $B$.
\end{definition}

Observe that if $B'\prec_{\epsilon}B$, then by the triangle inequality, $B'+B\subseteq B^+$.  Combining this with the fact that $B$ is regular yields that 
\begin{equation}\label{eq:regfact1}
|(B'+B)\Delta B^+|=|B^+\setminus (B+B')|\leq |B^+\setminus B|\leq \epsilon |B|.
\end{equation}

We shall not require any more sophisticated properties of pairs $B'\prec_{\epsilon}B$ in this paper, and even the above is only used in Section \ref{sec:struc}.

Finally, as is standard in arithmetic combinatorics, we shall on occasion use the notation $f\ll g$ to mean that there exists a positive constant $C$ such that $f(x)\leq C g(x)$ for all $x\in G$.

\section{Stable sets are dense on Bohr sets}\label{sec:almostper}

In 2010, Croot and Sisask \cite{Croot:2010fk} introduced an important combinatorial technique establishing the existence of almost-periods of certain convolutions. This technique has found wide-reaching applications. In particular, it was instrumental in Sanders's proof of the Bogolyubov-Ruzsa lemma \cite{Sanders:2010wb} and the best known bounds on Roth's theorem \cite{Sanders:2011uq,Bloom:2014wx}.

The main idea holds for finite subsets $A$ and $B$ of \emph{any} group $G$ (not necessarily finite or abelian): given a set $S$ such that $|S\cdot A|\leq C|A|$, one can find a large subset $T$ of $S$ such that the convolution $\mu_A*1_B$ is approximately shift-invariant under all shifts by elements of the product set $TT^{-1}$, with the extent of the shift-invariance being measured by an $L^p$ norm for $p\in [2,\infty)$. With the help of a little Fourier analysis, on finite abelian groups the set of almost-periods can then be given the structure of a Bohr set. 

In recent work \cite{Sisask:2018wz}, Sisask revisited the above result under the additional assumption that the set of translates of $A$ has bounded VC-dimension.\footnote{In fact, Sisask's results are valid for a slightly more general notion of VC-dimension than the one we use here.} Recall that in this context the \emph{VC-dimension} of $A$ is defined to be the VC-dimension of its set of translates $\{g+A:g\in G\}$, that is, the maximum size of a set shattered\footnote{A family $\mathcal{F}$ of sets is said to \emph{shatter} $X$ if every subset $Y\subseteq X$ can be written as $Y=X\cap F$ for some $F\in\mathcal{F}$.} by $\{g+A:g\in G\}$. This assumption leads to a strengthening of the almost-periodicity conclusion in the sense that shift-invariance now holds pointwise (i.e. with the $L^\infty$ norm). We shall need the following version of \cite[Theorem 1.6]{Sisask:2018wz} that works relative to a Bohr set. 

\begin{proposition}[Relative almost periodicity for sets of bounded VC-dimension]\label{prop:propositionB}
Let $\eta, \rho, \sigma \in (0,1]$ and $d\in \mathbb{N}$.  Suppose that $A\subseteq G$ is a set of VC-dimension at most $d$, $B=B(K,\rho)$ is a Bohr set and $S$ is a subset of $B$ of relative density $\sigma=|S|/|B|$ with the property that $|A+S|\leq C|A|$. Then there exists a regular Bohr set $B'=B(K',\rho')\subseteq B$ with $|K'|\leq |K|+t'$, where
\[t'\ll\log (\sigma^{-1})+\eta^{-2}d\log(\eta^{-1})^2\log(C),\]
and $\rho'\gg \rho\eta/(|K|^2t')$,
such that for all $x\in G$, $t\in B'$,
\[|\mu_A*1_{-A}(x+t)-\mu_A*1_{-A}(x)|\leq \eta.\]
\end{proposition}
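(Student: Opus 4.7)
The plan is to adapt Sisask's argument for \cite[Theorem 1.6]{Sisask:2018wz} to the relative setting, working inside the given regular Bohr set $B$. The proof has four movements: a Croot--Sisask sampling step that approximates $g := \mu_A * 1_{-A}$ by an empirical average over a random sample from $A$; a Vapnik--Chervonenkis $\epsilon$-approximation step that upgrades the approximation from $L^p$ to $L^\infty$ using the VC-dimension hypothesis; a pigeonhole on shifted samples that exploits the Pl\"unnecke-type bound $|A+S| \leq C|A|$; and a relative Bogolyubov-type extraction of a Bohr set of almost periods.

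First, I would fix a sampling parameter $L \asymp \eta^{-2} d \log(\eta^{-1})^2$ and form the empirical average $F^{\vec{a}}(y) := L^{-1} \sum_{i=1}^L 1_{-A}(y - a_i)$, for $\vec{a} \in A^L$ chosen uniformly at random; its expectation is $g(y)$. Because the family $\{-A + y : y \in G\}$ has VC-dimension at most $d$, Haussler's $\epsilon$-approximation theorem guarantees that with probability at least $1/2$ over $\vec{a}$ the bound $|F^{\vec{a}}(y) - g(y)| \leq \eta/4$ holds uniformly in $y$. For each fixed $s \in S$, the shifted tuple $\vec{a} + s$ is a uniform sample from the translate $A + s$ (which shares the same VC-dimension), so with probability at least $1/2$ one likewise has $|F^{\vec{a}+s}(y) - g(y - s)| \leq \eta/4$. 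Summing over $s$, at least $|A|^L |S|/2$ pairs $(\vec{a}, s) \in A^L \times S$ are ``good''. Pigeonholing the map $(\vec{a}, s) \mapsto \vec{a} + s$ into $(A+S)^L$, whose size is at most $(C|A|)^L$, yields a tuple $\vec{w} \in (A+S)^L$ receiving at least $|S|/(2C^L)$ good pairs. The corresponding set $T := \{s \in S : \vec{w} - s \in A^L \text{ and } (\vec{w} - s, s) \text{ is good}\}$ has relative density $\sigma' := |T|/|B| \geq \sigma/(2C^L)$ inside $B$. The identity $F^{\vec{w}-s_1}(y) = F^{\vec{w}-s_2}(y + s_1 - s_2)$ combined with the two approximation bounds then gives $|g(y + t) - g(y)| \leq \eta/2$ for every $y \in G$ and every $t \in T - T$.

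It remains to convert the large subset $T$ of $B$ into a Bohr set of almost periods. For this I would invoke a relative Bogolyubov--Chang-type lemma (see e.g.\ \cite[Chapter 4]{Tao:2010rq} or the arguments of \cite{Sanders:2011uq,Sisask:2018wz}) to produce a regular Bohr set $B'' = B(K'', \rho'')$ contained in $2T - 2T$, with frequency set $K'' \supseteq K$ of size $|K''| \leq |K| + O(\log \sigma'^{-1})$ and width $\rho'' \gg \rho \eta / |K''|^{O(1)}$. The triangle inequality extends the almost-periodicity from $T - T$ to $2T - 2T$ at the cost of doubling the error, giving $|g(y + t) - g(y)| \leq \eta$ for all $y \in G$ and $t \in B''$. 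A final application of Lemma \ref{lem:regexist} produces the advertised regular Bohr set $B'$ with $|K'| \leq |K| + t'$ and $\rho' \gg \rho \eta / (|K|^2 t')$, where $t' \ll \log \sigma^{-1} + L \log C = \log \sigma^{-1} + \eta^{-2} d \log(\eta^{-1})^2 \log C$.

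The principal obstacle is this last extraction step: one must pass from a subset $T$ of density $\sigma'$ inside the Bohr set $B$ to a genuine Bohr set of almost periods while keeping the rank increment additive in $|K|$ (rather than multiplicative) and the width polynomial in $\rho$ and the new rank. Carrying this out requires a careful relative Fourier-analytic argument in which the spectrum of $\mu_T$ is controlled via Chang-type bounds and then combined with the frequency set $K$ of $B$ to define the new Bohr set. Sisask handles the analogous step in the ambient-group setting; adapting it to the relative setting essentially amounts to replacing the Haar measure on $G$ by the Bohr-averaged measure $\mu_B$ at each stage of the Fourier analysis.
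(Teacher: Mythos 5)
Your first three movements reproduce the content of \cite[Theorem 1.5]{Sisask:2018wz}, which the paper cites as a black box: Croot--Sisask sampling, Haussler's VC $\epsilon$-approximation to upgrade to $L^\infty$, and the pigeonhole exploiting $|A+S|\le C|A|$ together produce a set $T\subseteq S$ of relative density at least $0.99\exp(-C'dk^2\log(C)/\eta^2)$ in $B$ (with $k\asymp\log(1/\eta)$) such that \emph{all of} $kT-kT$, not merely $T-T$, consists of $L^\infty$ almost-periods of $g:=\mu_A*1_{-A}$ with error $\eta/4$. Retaining the $k$-fold iterate is not a cosmetic difference; it is what makes the paper's last step work.

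The gap is in your final extraction. You want a Bohr set \emph{contained in} $2T-2T$ with rank increment $O(\log(1/\sigma'))$, but no Bogolyubov-type lemma gives an additive-logarithmic rank increment: even Sanders's Bogolyubov--Ruzsa theorem loses a power, $\log^{O(1)}(1/\sigma')$, and feeding $\sigma'\asymp\sigma\exp(-\eta^{-2}d\log^2(\eta^{-1})\log C)$ through such a loss destroys the target bound $t'\ll\log(\sigma^{-1})+\eta^{-2}d\log^2(\eta^{-1})\log C$. The paper never asks for a containment. Following Schoen--Sisask \cite[Theorem 5.4]{Schoen:2014ua}, it translates $X:=T-z$ so that $0\in X$, smooths by the $k$-fold convolution power $\mu_X^{(k)}$ (which moves $g$ by at most $\eta/4$ in $L^\infty$, since $kX\subseteq kT-kT$ are almost-periods), and then invokes the relative Chang--Sanders lemma \cite[Proposition 4.2]{Sanders:2008dw} to produce a Bohr set $B'=B(K\cup K'',\rho')$ with $|K''|\ll\log(2/\mu_B(T))$ on which every $\gamma\in\mathrm{Spec}_{1/2}(\mu_X)$ satisfies $|1-\gamma(t)|\le\eta/2$. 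The Fourier estimate of $|g*\mu_X^{(k)}(x+t)-g*\mu_X^{(k)}(x)|$ then splits according to $\mathrm{Spec}_{1/2}(\mu_X)$: large Fourier coefficients are tamed by the Bohr-set condition, small ones by $|\widehat{\mu_X}(\gamma)|^k\le 2^{-k}\le\eta/2$, and Cauchy--Schwarz with Parseval closes the argument. This is a strictly weaker demand on $B'$ than containment in an iterated difference set --- only spectral annihilation of a $1/2$-spectrum --- and a Chang-type bound delivers it with the additive-logarithmic rank increment that you need but a Bogolyubov-type lemma cannot supply. Without the $\mu_X^{(k)}$ smoothing, the small-spectrum part of the Fourier sum is uncontrolled, which is exactly why your $T-T$ version forces you into the (unavailable) containment route.
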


Before giving the proof of Proposition \ref{prop:propositionB}, we deduce the main result of this section, which will be a key ingredient in the proof of Theorem \ref{thm:main}.

\begin{proposition}[Sets of bounded VC-dimension are dense on Bohr sets]\label{prop:theoremA}
Let $\eta, \beta, \rho, \sigma \in (0,1]$ and $d\in \mathbb{N}$.  Suppose that $A\subseteq G$ is a set of VC-dimension at most $d$, and that $B=B(K,\rho)$ is a Bohr set with density $\beta$ and the property that $|A\cap B|\geq \sigma |B|$. Then there exists $x\in G$ and a regular Bohr set $B'=B(K',\rho')\subseteq B$ with 
$|K'|\leq |K|+t'$ and $\rho'\gg\rho\eta/(|K|^2t')$, where
\[t'\ll \log (\sigma^{-1})+\eta^{-2}d\log^2(\eta^{-1})\log((\sigma\beta)^{-1}),\]
such that
\[|A\cap (B'+x)|\geq (1-\eta)|B'|.\]
\end{proposition}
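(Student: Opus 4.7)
The plan is to deduce Proposition~\ref{prop:theoremA} as a direct corollary of Proposition~\ref{prop:propositionB} combined with a short pigeonhole argument. The natural choice of auxiliary set is $S := A \cap B$, which by hypothesis has relative density at least $\sigma$ in $B$. For the required bound on the doubling of $A$ relative to $S$, I would use the trivial inclusion $A+S \subseteq G$ together with the density lower bound $|A| \ge |A \cap B| \ge \sigma\beta|G|$, which yields $|A+S| \le (\sigma\beta)^{-1}|A|$; so one may take $C = (\sigma\beta)^{-1}$. Plugging $\log C = \log((\sigma\beta)^{-1})$ into the bound on $t'$ from Proposition~\ref{prop:propositionB} reproduces exactly the bound on $t'$ claimed in Proposition~\ref{prop:theoremA}.

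This application yields a regular Bohr set $B' \subseteq B$ with the advertised parameters such that
\[|\mu_A * 1_{-A}(x+t) - \mu_A * 1_{-A}(x)| \le \eta\]
for every $x \in G$ and $t \in B'$. Unwrapping the definition of convolution and the normalisation of $\mu_A$ gives
\[\mu_A * 1_{-A}(y) = \frac{|A \cap (A+y)|}{|A|}\]
for every $y \in G$; in particular $\mu_A * 1_{-A}(0) = 1$. Specialising the almost-periodicity estimate to $x = 0$ therefore produces the shift-invariance conclusion $|A \cap (A+t)| \ge (1-\eta)|A|$ for every $t \in B'$.

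To finish, I would average this over $x \in A$. A standard double-counting gives
\[\sum_{x \in A} |A \cap (B'+x)| = \sum_{t \in B'} |A \cap (A-t)|,\]
and since Bohr sets are symmetric ($-t \in B'$ whenever $t \in B'$), the previous step bounds each summand on the right-hand side by at least $(1-\eta)|A|$, producing a total of at least $(1-\eta)|A||B'|$. Pigeonholing then yields some $x \in A \subseteq G$ with $|A \cap (B'+x)| \ge (1-\eta)|B'|$, as required.

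The argument is essentially pure bookkeeping once Proposition~\ref{prop:propositionB} is in hand, so I do not anticipate any serious conceptual obstacle. The only point worth emphasising is that the trivial doubling bound $C = (\sigma\beta)^{-1}$ is exactly what produces the stated dependence on $\log((\sigma\beta)^{-1})$ in the bound on $t'$; this is the step at which the density hypothesis in Proposition~\ref{prop:theoremA} feeds into the almost-periodicity lemma.
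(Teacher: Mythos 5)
Your proposal is correct and follows essentially the same route as the paper: same choice $S=A\cap B$, same trivial doubling bound $C=(\sigma\beta)^{-1}$, same application of Proposition~\ref{prop:propositionB}, and the same averaging/pigeonhole step to extract a dense translate of $B'$. The paper phrases the final averaging step via the identity $\mu_A*\mu_{B'}*1_{-A}(0)=\E_{a\in A}\mu_{B'}*1_A(a)$ rather than your explicit double-count $\sum_{x\in A}|A\cap(B'+x)|=\sum_{t\in B'}|A\cap(A-t)|$, but these are the same computation in different notation (both implicitly using the symmetry of $B'$).
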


\begin{proofof}{Proposition \ref{prop:theoremA} assuming Proposition \ref{prop:propositionB}}
Given $A\subseteq G$ of VC-dimension at most $d$, and $B=B(K,\rho)$ of density $\beta$ such that $|A\cap B|\geq \sigma |B|$, set $S:=A\cap B$. Note that $S$ has density at least $\sigma$ on $B$, and that $|A+S|\leq |G|\leq(\sigma\beta)^{-1}|A|$. So we can apply Proposition \ref{prop:propositionB} with $C=(\sigma\beta)^{-1}$ to conclude that there exists a regular Bohr set $B'=B(K',\rho')\subseteq B$ with $|K'|\leq |K|+t'$ and $\rho'\gg\rho\eta/(|K|^2t')$, where
\[t'\ll \log (\sigma^{-1})+\eta^{-2}d\log^2(\eta^{-1})\log(C),\]
such that for all $x\in G$, $t\in B'$,
\[|\mu_A*1_{-A}(x+t)-\mu_A*1_{-A}(x)|\leq \eta.\]
In particular, since $\mu_A*1_{-A}(0)=1$, we find that $\mu_A*1_{-A}(t)\geq 1-\eta$ for all $t\in B'$. But averaging over all such $t$ implies that 
\[\mu_A*\mu_{B'}*1_{-A}(0)=\E_{t\in B'} \mu_A*1_{-A}(t)\geq 1-\eta,\]
where the left-hand side can be rewritten as
\[\mu_A*\mu_{B'}*1_{-A}(0)=\E_{a\in A} \mu_{B'}*1_A(a).\]
It follows that there exists $a\in A$ such that $1_A*\mu_{B'}(a)\geq 1-\eta$, which concludes the proof.
\end{proofof}

It is easy to see that if a set $A$ is $k$-stable according to Definition \ref{def:stableset}, then its VC-dimension is at most $k-1$ (see for example \cite[Lemma 6.2]{Sisask:2018wz}). Consequently, Proposition \ref{prop:theoremA} applies to stable subsets of $G$ and serves to replace \cite[Proposition 2]{Terry:5CFoQi42}, which was proved in the toy setting of high-dimensional vector spaces over prime fields. In fact, \cite[Proposition 2]{Terry:5CFoQi42} and its key ingredient \cite[Lemma 7]{Terry:5CFoQi42} are valid under the weaker assumption of bounded VC-dimension, but the proof of \cite[Lemma 7]{Terry:5CFoQi42} does not appear to generalise to finite abelian groups.

It is the following immediate corollary of Proposition \ref{prop:theoremA} that we shall use in the next section. The bounds we give are not best possible, but the exact shape does not ultimately make any difference to the main result we shall prove. What is important, however, is that they remain valid in the case where $K=\emptyset$ and/or $\rho=1$.  Unless otherwise stated, all logarithms are natural (to base $e$).

\begin{proposition}[Stable sets are dense on Bohr sets]\label{prop:theoremC}
There are constants $C_1\geq 1$ and $C_2>0$ such that the following holds.  Let $\epsilon\in (0,1/6)$, $\rho \in (0,1]$ and $k\in \mathbb{N}$.  Suppose that $A\subseteq G$ is $k$-stable and that $B=B(K,\rho)$ is a Bohr set with the property that $|A\cap B|\geq \epsilon |B|$. Then there exists $x\in G$ and a regular Bohr set $B'=B(K',\rho')\subseteq B$ with 
\begin{align*}
|K'|\leq C_1 (|K|k\e^{-3}\log(\rho^{-1}) +\log(\e^{-1}))\text{ and }\rho'\geq C_2\rho \e (C_1(|K|k\e^{-3}\log(\rho^{-1}) +\log(\e^{-1})))^{-3},
\end{align*}
such that 
\[|A\cap (B'+x)|\geq (1-\epsilon)|B'|.\]
\end{proposition}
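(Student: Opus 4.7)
The plan is to apply Proposition \ref{prop:theoremA} directly to the stable setting, with routine simplification of the resulting bounds. Two ingredients are needed: first, the fact that $k$-stability implies VC-dimension at most $k-1 \leq k$, as noted in the remark immediately following Proposition \ref{prop:theoremA}; and second, the lower bound $\beta := |B|/|G| \geq \rho^{|K|}$ from Lemma \ref{lem:sizefact}, which gives $\log(\beta^{-1}) \leq |K|\log(\rho^{-1})$.

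Setting $\sigma = \eta = \epsilon$ and $d = k$ in Proposition \ref{prop:theoremA} and substituting the $\beta$ bound, one obtains
\[ t' \ll \log(\epsilon^{-1}) + \epsilon^{-2}\,k\,\log^2(\epsilon^{-1})\,\bigl(\log(\epsilon^{-1}) + |K|\log(\rho^{-1})\bigr). \]
I would then apply the elementary estimate $\log^j(\epsilon^{-1}) \ll \epsilon^{-1}$, valid for any fixed integer $j \geq 1$ and $\epsilon \in (0,1/6)$ with an absolute implicit constant, which collapses the logarithmic factors into powers of $\epsilon^{-1}$ and yields $t' \ll k\epsilon^{-3}(1 + |K|\log(\rho^{-1})) + \log(\epsilon^{-1})$. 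Since $|K'| \leq |K|+t'$ by Proposition \ref{prop:theoremA}, the claimed rank bound follows for a sufficiently large absolute constant $C_1$.

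For the width, Proposition \ref{prop:theoremA} provides $\rho' \gg \rho\epsilon/(|K|^2 t')$. Writing $R$ for the right-hand side of the rank bound just derived, we have $|K|+t' \leq R$, and therefore $|K|^2 t' \leq (|K|+t')^3 \leq R^3$ (since $(|K|+t')^3 \geq 3|K|^2 t'$ by expansion). This rearranges to $\rho' \geq C_2\rho\epsilon/R^3$, which is precisely the stated lower bound. The existence of $x \in G$ with $|A \cap (B'+x)| \geq (1-\epsilon)|B'|$ is immediate from the conclusion of Proposition \ref{prop:theoremA} with $\eta = \epsilon$.

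The edge cases $K = \emptyset$ and $\rho = 1$ require no special treatment: in both, the term $|K|\log(\rho^{-1})$ vanishes, and the remaining additive $\log(\epsilon^{-1})$ contribution to $R$ is what remains of the bound. The proof presents no genuine obstacle; the work lies entirely in the bookkeeping of the logarithmic factors and the constants $C_1$, $C_2$, making this essentially a translation of the general VC-dimensional Proposition \ref{prop:theoremA} into the language needed for the tree-building argument of Section \ref{sec:mainproof}.
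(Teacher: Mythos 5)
Your proposal matches the paper's own proof essentially step for step: apply Proposition \ref{prop:theoremA} with $\eta=\epsilon$, a VC-dimension bound derived from $k$-stability, and $\sigma\geq\epsilon$; substitute $\log(\beta^{-1})\leq|K|\log(\rho^{-1})$ via Lemma \ref{lem:sizefact}; absorb the logarithmic factors into powers of $\epsilon^{-1}$; and use the elementary inequality $|K|^2t'\leq(|K|+t')^3$ to pass from the width bound $\rho'\gg\rho\epsilon/(|K|^2t')$ to the stated form. The paper takes $d=k-1$ and works through the auxiliary parameter $m=\lceil 1/\epsilon\rceil$ while you take $d=k$ and write $\log^j(\epsilon^{-1})\ll\epsilon^{-1}$ directly, but these are purely cosmetic choices.

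One small caveat worth recording: your closing remark that the edge cases $K=\emptyset$ and $\rho=1$ ``require no special treatment'' is not actually borne out by your own intermediate estimate $t'\ll k\epsilon^{-3}\bigl(1+|K|\log(\rho^{-1})\bigr)+\log(\epsilon^{-1})$, since the additive $k\epsilon^{-3}$ survives when $|K|\log(\rho^{-1})=0$ and is not dominated by $\log(\epsilon^{-1})$, so the stated form $C_1\bigl(|K|k\epsilon^{-3}\log(\rho^{-1})+\log(\epsilon^{-1})\bigr)$ does not follow there. The paper's own derivation carries the same implicit assumption (the step $\log(m\rho^{-|K|})\ll|K|\log(m/\rho)$ uses $|K|\geq 1$), and in the application in Theorem \ref{thm:mainthmgen} the rank $|K_\eta|$ is anyway replaced by $\ell(t)\geq 1$, so this is an imprecision you share with the source rather than a defect unique to your argument; still, it deserves a line rather than the claim that the edge cases are automatic.
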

\begin{proof}
By Proposition \ref{prop:theoremA} with $d=k-1$, $\eta=\e$ and $\sigma=|A\cap B|/|B|\geq \epsilon$, there exists an $x\in G$ and a regular Bohr set $B'=B(K',\rho')\subseteq B$ satisfying $|A\cap (B'+x)|\geq (1-\epsilon) |B'|$, $|K'|\leq |K|+t'$, and $\rho' \gg  \rho \e/|K|^2t'$, where
\[t'\ll\log (\epsilon^{-1}) + \epsilon^{-2}k\log^2(\epsilon^{-1})\log(\sigma^{-1}\beta^{-1}),\]
where $\beta=|B|/|G|$ is the density of $B$. By Lemma \ref{lem:sizefact}, we have $\beta \geq \rho^{|K|}$, and hence, on setting $m:=\lceil 1/\epsilon \rceil$,
\begin{align*}
t'\ll \log m +k(m\log m)^2 \log (m\rho^{-|K|})\ll k(m\log m)^2|K|\log (m/\rho)+\log m.
\end{align*}
It follows that
\[|K'|\leq |K|+t'\ll |K|(1+k(m\log m)^2\log(m/\rho))+\log m\ll |K|km^3\log (\rho^{-1})+\log m,\]
as desired.  Note also that $(|K|+t')^3\geq |K|^2t'$, so we have a lower bound on the width $\rho'$ of 
\[\rho'\gg \epsilon \rho (|K|+t')^{-3}\gg \epsilon \rho (|K|k m^3\log (\rho^{-1})+\log m)^{-3}.\] 
Substituting $\e^{-1}$ for $m$ and choosing $C_1,C_2$ appropriately yields the desired inequalities. 
\end{proof}

We now turn to the proof of Proposition \ref{prop:propositionB}.

\begin{proofof}{Proposition \ref{prop:propositionB}}
Suppose that $A\subseteq G$ is a set of VC-dimension at most $d$, $B=B(K,\rho)$ is a Bohr set and $S$ is a subset of $B$ of density $\sigma$ on $B$ with the property that $|A+S|\leq C|A|$. Now \cite[Theorem 1.5]{Sisask:2018wz} with $A=B$, $\e=\eta/4$ and $k=\lceil \log_2(2/\eta)\rceil$ yields a set $T\subseteq S$ of size at least $0.99\exp(-C'dk^2\log (C)/\eta^2)|S|$ for some constant $C'$ such that for all $x\in G$ and all $t\in kT-kT$,
\begin{equation}\label{eq:almostper}
|\mu_A*1_{-A}(x+t)-\mu_A*1_{-A}(x)|\leq \eta/4.
\end{equation}

For the remainder of the argument we follow the blueprint of the proof of \cite[Theorem 5.4]{Schoen:2014ua}, which establishes a relative $L^\infty$ almost-periodicity result for multiple convolutions. Fix an arbitrary $z\in T$, and set $X:=T-z$. It follows that (\ref{eq:almostper}) holds for all $x\in G$ and all $t\in kX\subseteq kT-kT$. Now by the triangle inequality and since $\E_y \mu_X^{(m)}(y)=1$ for any positive integer $m$,
\begin{equation}\label{eq:almostper2}
|\mu_A*1_{-A}*\mu_X^{(k)}(x)-\mu_A*1_{-A}(x)|\leq \eta/4
\end{equation}
for all $x\in G$, where for a function $f:G\rightarrow \C$, $f^{(m)}$ denotes the $m$-fold convolution of $f$ with itself.

Note that since $S$ is a dense subset of $B$, so is $T$. Specifically, $X$ is a subset of $B-z$ of relative density at least
\[0.99\exp(-C'dk^2\log(C)/\eta^2)\sigma.\]
Note also that translating $B$ by $-z$ does not affect the conclusion of \cite[Proposition 5.3]{Schoen:2014ua}, which is due to Chang and Sanders \cite[Proposition 4.2]{Sanders:2008dw}. Applying it to $T=X+z$, with $\nu=\eta/2$ and $\delta=1/2$, gives a set of characters $K''$ of size
\[|K''|\ll \log(2/\mu_B(T))\ll \log(1/ \sigma)+\eta^{-2}dk^2\log (C)\]
and a radius 
\[\rho'\gg \rho \eta/(|K|^2 \log(2/\mu_B(T))) \] 
such that 
\[|1-\gamma(t)|\leq \eta/2, \mbox{ for all } \gamma\in \mathrm{Spec}_{1/2}(\mu_X) \mbox{ and } t\in B'=B(K',\rho'),\]
where $K':=K\cup K''$. It can be checked that the parameters of $B'=B(K',\rho')$ satisfy the conclusions of Proposition \ref{prop:propositionB}. If $B'$ is not regular, it is easy to make it so at the cost of a factor of 2 in the width, by Lemma \ref{lem:regexist}.

Now for $x\in G$ and $t\in B'$, by the triangle-inequality
\[|\mu_A*1_{-A}*\mu_X^{(k)}(x+t)-\mu_A*1_{-A}*\mu_X^{(k)}(x)|\leq \sum_\gamma |\widehat{\mu_A}(\gamma)||\widehat{1_{-A}}(\gamma)||\widehat{\mu_X}(\gamma)|^k|\gamma(t)-1|.\]
The latter sum splits into two parts: when $\gamma \in\mathrm{Spec}_{1/2}(\mu_X)$, we have $|1-\gamma(t)|\leq \eta/2$, and when $\gamma \notin\mathrm{Spec}_{1/2}(\mu_X)$, then $|\mu_X(\gamma)|^k\leq 1/2^k\leq \eta/2$, from which it follows that 
\[\sum_\gamma |\widehat{\mu_A}(\gamma)||\widehat{1_{-A}}(\gamma)||\widehat{\mu_X}(\gamma)|^k|\gamma(t)-1|\leq \frac{\eta}{2} \sum_\gamma  |\widehat{\mu_A}(\gamma)||\widehat{1_{-A}}(\gamma)|.\]
This expression in turn is bounded above, using the Cauchy-Schwarz inequality, by
\[\frac{\eta}{2} \left(\sum_\gamma  |\widehat{\mu_A}(\gamma)|^2 \right)^{1/2}\left(\sum_\gamma |\widehat{1_{-A}}(\gamma)|^2 \right)^{1/2}=\frac{\eta}{2},\]
where the final equality follows from Parseval's identity.
Finally, we are able to bound 
\begin{align*}
|\mu_A*1_{-A}&(x+t)-\mu_A*1_{-A}(x)|\leq |\mu_A*1_{-A}(x+t)-\mu_A*1_{-A}*\mu_X^{(k)}(x+t)|\\+&|\mu_A*1_{-A}*\mu_X^{(k)}(x+t)-\mu_A*1_{-A}*\mu_X^{(k)}(x)|+|\mu_A*1_{-A}*\mu_X^{(k)}(x)-\mu_A*1_{-A}(x)|.
\end{align*}
The first and third terms in the sum are bounded above by $\eta/4$ using (\ref{eq:almostper2}), and we have just shown that the second term is at most $\eta/2$ for all $t\in B'$.
\end{proofof}

\section{Building a tree in the absence of efficient regularity}\label{sec:mainproof}

Our aim in this section is to prove the following. Throughout, $\log_{\bullet}(x):=\log(x)+1$.

\begin{theorem}[Efficient regularity with respect to Bohr sets]\label{thm:mainthmgen}
For all $k\geq 2$, $\mu\in (0,1)$, and $r\in (0,1]$, there are constants $F$, $D=D(k)$, and $n_0=n_0(k,\mu,r)$ such that the following holds. Suppose that $G$ is a finite abelian group of order at least $n_0$, and that $A\subseteq G$ is $k$-stable.  Then there is a regular Bohr set $B=B(K,\rho)$ which is $2\mu$-good for $A$ and which satisfies $|K|\leq (D\log_{\bullet}(r^{-1})/\mu F)^{D}$ and $r(F\mu/D\log_{\bullet}(r^{-1}))^{D}\leq \rho \leq r$.
\end{theorem}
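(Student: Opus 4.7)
The plan is a proof by contradiction following the tree construction of \cite{Terry:5CFoQi42} (and ultimately \cite{Malliaris:2014go}). Assume for contradiction that no regular Bohr set $B=B(K,\rho)$ within the stated parameter bounds is $2\mu$-good for $A$. Using this failure iteratively together with Proposition~\ref{prop:theoremC}, I will produce a tree of depth $k$ from which one can extract sequences $(a_i)_{i=1}^k,(b_j)_{j=1}^k$ witnessing the $k$-order property in $A$, contradicting $k$-stability. Proposition~\ref{prop:theoremC} here replaces the vector-space-only \cite[Lemma~7]{Terry:5CFoQi42}, which does not generalise to arbitrary finite abelian groups.

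Starting from an initial regular Bohr set $B_0$ of rank zero and width in $[r/2,r]$ (via Lemma~\ref{lem:regexist}), I build inductively for $i=1,\ldots,k$ a nested regular Bohr set $B_i\subseteq B_{i-1}$, an element $y_i\in G$, and a family of translates of $B_i$ indexed by bit-strings $\sigma\in\{0,1\}^i$ such that the bit $\sigma_j$ records whether $|A\cap(B_i+t_\sigma+y_j)|\geq(1-\mu)|B_i|$ (if $\sigma_j=1$) or $|A\cap(B_i+t_\sigma+y_j)|\leq\mu|B_i|$ (if $\sigma_j=0$) for each $j\leq i$. At each inductive step, the assumed failure of $2\mu$-goodness supplies $y_i\in G$ such that $(A-y_i)$ has density in $[2\mu,1-2\mu]$ on some translate of $B_{i-1}$; Proposition~\ref{prop:theoremC} applied to $A-y_i$ and to its complement (both approximately $k$-stable) then produces two new translates realising the prescribed density dichotomy. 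Density conditions from earlier levels are transferred from $B_{i-1}$ to $B_i$ with additive $O(\mu)$ loss via the almost-subgroup property~(\ref{eq:regfact1}), which is why the conclusion is $2\mu$-goodness rather than $\mu$-goodness.

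After $k$ steps, for each $i\in\{1,\ldots,k\}$ I pick $b_i$ from the translate labelled by $\sigma^{(i)}:=(0^{i-1},1^{k-i+1})$, selected (via a union bound valid for $k\mu$ small, which we may assume since the theorem is trivial otherwise) so that $b_i+y_j\in A$ iff $\sigma^{(i)}_j=1$, that is, iff $j\geq i$. Setting $a_j:=y_j$, the sequences $(a_j)_{j=1}^k$ and $(b_i)_{i=1}^k$ witness the $k$-order property, yielding the desired contradiction.

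The main obstacle is careful parameter tracking over the $k$ iterations, coupled with a subtle point in the tree construction: the single witness $y_i$ produced by non-$2\mu$-goodness may not simultaneously split every existing translate, since translation alters which shifts are ``bad''. This is handled either by maintaining only the $O(k)$ branches ultimately needed in the extraction (namely $\sigma^{(1)},\ldots,\sigma^{(k)}$) and choosing $y_i$ to split the current ``live'' branch, or by a pigeonhole/selection step — either route is absorbed into the $D(k)$ factor. Each invocation of Proposition~\ref{prop:theoremC} (with $\epsilon$ a constant multiple of $\mu$) multiplies the rank by a polynomial in $(|K_i|,k,\mu^{-1},\log(\rho_i^{-1}))$ and shrinks the width by the corresponding inverse polynomial factor. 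Composing over $k$ levels together with the constants from~(\ref{eq:regfact1}) yields the claimed bounds $|K_k|\leq(D\log_{\bullet}(r^{-1})/\mu F)^{D}$ and $\rho_k\geq r(F\mu/D\log_{\bullet}(r^{-1}))^{D}$, with $D=D(k)$ absorbing the $k$-fold iteration and $F$ an absolute constant. The lower bound $n_0=n_0(k,\mu,r)$ on $|G|$ is chosen via Lemma~\ref{lem:sizefact} so that all intermediate Bohr sets remain non-empty.
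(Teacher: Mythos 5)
Your high-level plan (proof by contradiction via iterated tree construction, with Proposition~\ref{prop:theoremC} in place of the vector-space-only \cite[Lemma~7]{Terry:5CFoQi42}) matches the paper's, and you correctly identify the central difficulty: a single witness of non-goodness cannot simultaneously split all existing branches. But the resolution you propose has a genuine gap.

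You want to build a tree of depth only $k$ and extract the $k$-order property directly by prescribing the bit patterns $\sigma^{(i)}=(0^{i-1},1^{k-i+1})$ along a comb of $O(k)$ branches. The problem is sign control: at step $i+1$ the witness $y_{i+1}$ is dictated by the assumed failure of $2\mu$-goodness (and can be arranged, after a translation, to split the live branch $(0^i)$), but you have no say over whether $A-y_{i+1}$ is dense or sparse on the other comb branches' translates. Proposition~\ref{prop:theoremC} will hand you a sub-translate which is nearly all-$A$ \emph{or} nearly all-complement, whichever that branch happens to prefer; it cannot force the bit to be $1$. Consequently the final patterns need not form a staircase, and the direct extraction of the $k$-order property fails. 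The pigeonhole/selection alternative you mention is too vague to repair this: once a bit has the wrong sign there is no way to pigeonhole it into place without a much larger tree.

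The paper sidesteps this entirely by making no attempt to prescribe signs. It builds a \emph{full} binary tree of depth $d=d(k)<2^{k+2}-2$, with a separate Bohr set $B_\eta$ and an auxiliary set $X_\eta$ at \emph{every} node $\eta\in 2^{\leq d}$, rather than a single nested chain $B_0\supseteq\cdots\supseteq B_k$. Both children of every node are continued, which is possible because $N^0(g_\eta)\cap X_\eta$ and $N^1(g_\eta)\cap X_\eta$ both have density $>\epsilon$ on $B_\eta$, so Proposition~\ref{prop:theoremC} yields a dense sub-translate for each $i\in\{0,1\}$ regardless of which sign occurs. The contradiction is then derived from the model-theoretic tree bound (Definition~\ref{def:treebound} and Theorem~\ref{thm:treefact}), not from an explicit instance of the order property; it is precisely this indirection that makes the unprescribed bit patterns harmless. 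To repair your proof you should go to depth $d(k)$, carry a Bohr set per node, and invoke Theorem~\ref{thm:treefact} as a black box; this also forces you to track the degrading stability degrees $f^{t}(k)$ of the sets $X_\eta$ down the tree, which is what ultimately drives the size of the constant $D(k)$.
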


The parameter $r$ has been introduced for the sole purpose of facilitating an easy deduction of several special cases of our structural result, see Corollaries 1-3 in Section \ref{sec:struc}. For the main application, we shall simply set $r=1$. Indeed, with $r=1$ Theorem \ref{thm:mainthmgen} immediately yields Theorem \ref{thm:main} as stated in the introduction. 

To prove Theorem \ref{thm:mainthmgen}, we closely follow the structure of the proof of \cite[Theorem 3]{Terry:5CFoQi42}, with Bohr sets in place of subspaces. It will be helpful to recall the following facts and notation from  \cite[Section 4]{Terry:5CFoQi42}.  Given an integer $n\geq 1$, let $2^n:=\{0,1\}^n$ and $2^{<n}:=\bigcup_{i=0}^{n-1} \{0,1\}^i$, where $\{0,1\}^0:=\{\langle\rangle\}$ is the set containing only the empty string, $\langle \rangle$. Given $\eta, \eta'\in 2^{<n}$, we write $\eta \trianglelefteq \eta'$ if and only if $\eta=\langle\rangle$ or $\eta$ is an initial segment of $\eta'$. Furthermore, $\eta \triangleleft \eta'$ means that $\eta \trianglelefteq \eta'$ and $\eta \neq \eta'$.  Given $\eta \in 2^n$ and $i\in \{0,1\}$, $\eta\wedge i$ denotes the element of $2^{n+1}$ obtained by adding $i$ to the end of $\eta$.  If $\eta=(\eta_1,\ldots, \eta_n) \in 2^n$ and $1\leq i\leq n$, let $\eta |_i:=(\eta_1,\ldots, \eta_i)$, and $\eta(i):=\eta_i$.  By convention, $\eta|_0:=\langle \rangle$ and $2^{<0}:=\emptyset$.  

The proof of Theorem \ref{thm:mainthmgen} proceeds by contradiction: we shall assume that there are no good Bohr sets with the desired parameters, and construct a witness of instability. Instead of constructing an instance of the order property, however, we aim to establish the existence of a \emph{tree}, in the following model-theoretic sense. 

\begin{definition}[Tree bound]\label{def:treebound}
Given a group $G$ and $A\subseteq G$, the \emph{tree bound} for $A$, denoted by $d(A)$, is the least integer $d$ such that there do not exist sequences $\langle a_{\eta}: \eta \in 2^d\rangle$, $\langle b_{\rho}: \rho \in 2^{<d}\rangle$ of elements of $G$ with the property that for each $\eta\in 2^d$ and $\rho\in 2^{<d}$, if $\rho \triangleleft \eta$, then $a_{\eta}+b_{\rho}\in A$ if and only if $\rho\wedge 1\trianglelefteq \eta$.
\end{definition}

Theorem \ref{thm:treefact} below, which is a special case of \cite[Lemma 6.7.9]{Wilfrid:1993wx}, asserts that the existence of a large tree is (at least qualitatively) equivalent to instability.

\begin{theorem}[Stability bounds height of trees]\label{thm:treefact}
For each integer $k$ there exists $d=d(k)<2^{k+2}-2$ such that if $A\subseteq G$ is a $k$-stable, then its tree bound $d(A)$ is at most $d$.  
\end{theorem}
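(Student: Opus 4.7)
The theorem is a classical fact in model-theoretic stability theory, relating the binary-tree rank to the length of the order property. The plan is to prove the contrapositive by induction on $k\geq 2$: if $A\subseteq G$ admits a tree of depth $d$ (in the sense of Definition~\ref{def:treebound}) with $d\geq d(k):=2^{k+2}-2$, then $A$ has the $k$-order property, contradicting $k$-stability. The recursion $d(k)=2d(k-1)+2$ matches this bound, with a base case $k=2$ verified by direct inspection of a shallow tree (tracking the $4\times 4$ membership pattern among $a_{\eta}+b_{\rho}$ for $\eta\in 2^{2}$ and $\rho\in 2^{<2}$ already exhibits the staircase $a_i+b_j\in A\iff i\leq j$ for $k=2$).

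For the inductive step, start from a tree $\langle a_\eta:\eta\in 2^{d(k)}\rangle,\ \langle b_\rho:\rho\in 2^{<d(k)}\rangle$. A naive attempt would place all the $b_j$'s of the would-be order configuration along a single root-to-leaf path (a \emph{spine}) $\rho_1\triangleleft\cdots\triangleleft\rho_k$ and take each $a_i$ as a leaf extending $\rho_k$, so that every interaction $a_i+b_{\rho_j}$ is controlled by the tree property via a bit of $\rho_k$. This fails, however, because all leaves extending $\rho_k$ agree on their interactions with the spine, ruling out the staircase. The correct approach is to let the spine branch: choose the $b_{\rho_j}$'s at successive levels but allow the $a_i$'s to split off the spine at the prescribed depths, while applying a Ramsey-style pigeonhole selection on the resulting sub-tree to absorb the \emph{cross} interactions $a_{\eta_i}+b_{\rho_j}$ with $\rho_j\not\triangleleft\eta_i$, which are not directly controlled by the tree-property definition. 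Each such sub-tree selection costs at most a factor of~$2$ in depth, and the $O(k)$ such selections required combine with the $k$ spine levels to produce the recursion $d(k)\leq 2d(k-1)+2$.

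The main obstacle is the bookkeeping for these cross-interactions: the tree property constrains $a_\eta+b_\rho$ only when $\rho$ is an ancestor of $\eta$, whereas the $k$-order property constrains all $k^2$ pairs. The exponential-in-$k$ depth budget $d(k)=2^{k+2}-2$ is exactly what is needed to iteratively prune the tree into a sub-configuration where both the spine memberships and the cross memberships fit the staircase pattern. A careful implementation tracks, at each pruning step, a single bit of additional information (which of two child-subtrees to keep) and verifies that after $O(k)$ such steps the surviving tree of depth $d(k-1)$ supports the inductive hypothesis, with the two top levels supplying the element that promotes the $(k-1)$-order configuration to a $k$-order configuration.

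Since this result is a direct specialization of \cite[Lemma~6.7.9]{Wilfrid:1993wx}, a fully detailed proof is available in Hodges' textbook; the outline above is intended to reproduce the essential combinatorial content of that argument, namely the interplay between the spine-plus-staircase ansatz and the Ramsey-type pruning that handles the unconstrained cross-interactions.
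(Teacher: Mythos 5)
The paper offers no proof of this statement: Theorem~\ref{thm:treefact} is quoted as a special case of \cite[Lemma~6.7.9]{Wilfrid:1993wx} and used as a black box, so the part of your proposal that defers to Hodges is exactly what the authors do. Your base case is in fact correct (and stronger than you suggest): in a height-$2$ tree the choice $a_1=a_{(1,1)}$, $a_2=a_{(1,0)}$, $b_1=b_{(1)}$, $b_2=b_{\langle\rangle}$ has all four sums constrained by Definition~\ref{def:treebound} and realises the $2$-order property, so no cross-interactions arise there at all. (Minor quibbles: the pattern is $4\times 3$, not $4\times 4$, and since the theorem asks for some $d<2^{k+2}-2$, your induction should really start from trees of height $2^{k+2}-3$.)

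The genuine gap is in the inductive step. You correctly identify that with a spine $\rho_1\triangleleft\cdots\triangleleft\rho_k$ and leaves branching off it, the sums $a_i+b_{\rho_j}$ with $j\leq i$ are forced by the tree, while the roughly $\binom{k}{2}$ pairs with $j>i$ are completely unconstrained. But the claim that a ``Ramsey-style pigeonhole selection on the resulting sub-tree'' fixes these at a cost of a factor of $2$ in depth is precisely the assertion that needs proof, and it is not obvious: nothing in the tree property guarantees that \emph{any} leaf below $\rho_i\wedge 1$ has all its sums with $b_{\rho_{i+1}},\dots,b_{\rho_k}$ landing in $A$, and pigeonholing over leaves does not preserve the subtree structure needed to continue the recursion. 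The actual argument behind \cite[Lemma~6.7.9]{Wilfrid:1993wx} is a double induction on two parameters, tracking partial ladders of \emph{both} orientations: when the desired upper-triangular memberships fail for many leaves, those failures are recycled into a configuration of the opposite type, and balancing the two cases is what produces the $2^{k+2}-2$ bound. Your single induction with $d(k)=2d(k-1)+2$ reproduces the right numerology but not the mechanism. Since the paper treats the lemma as a citation, simply citing Hodges would be fine; as a self-contained proof, the sketch does not yet close the cross-interaction step.
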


Before we begin the proof of Theorem \ref{thm:mainthmgen}, we set up some more notation. Given $A\subseteq G$, we let $A^1:=A$, $A^0:=G\setminus A$, and for $x\in G$ and $i\in \{0,1\}$, set $N^i(x):=A^i-x$.  On several occasions we shall require the following facts, proved in \cite[Lemmas 1 and 2]{Terry:5CFoQi42}. If $A$ is $k$-stable, $B$ is $\ell$-stable, $i\in \{0,1\}$ and $x\in G$, then $A^i+x$ is $(k+1)$-stable, and $A\cap B$ is $h(k,\ell)$-stable, where $h(k,\ell):=(k+\ell)2^{k+\ell}+1$.   Given $k\geq 2$, let $f_k(y)$ be the function in one variable defined by $f_k(y):=h(k+1,y)$. For $t\geq 1$, let $f_k^t(y)$ denote the function obtained by applying $f_k$ $t$ times. That is, for any integer $t\geq -1$, $f_k(f_k^t(y))=f_k^{t+1}(y)$, where by convention we let $f_k^{-1}$ and $f_k^0$ denote the constant functions $f_k^{-1}(y):=2$ and $f^0_k(y):=k$, respectively. Observe that for any $t\geq -1$ and $i\in \{0,1\}$, if $A$ is $k$-stable and $B$ is $f_k^t(k)$-stable, then $A^i\cap B$ is $f_k^{i+1}(k)$-stable.

\vspace{3mm}

\begin{proofof}{Theorem \ref{thm:mainthmgen}} 
Fix an integer $k\geq 2$. Let $f(y):=f_k(y)$ be as above, $d=d(k)$ as in Theorem \ref{thm:treefact}, $C_1, C_2$ as in Proposition \ref{prop:theoremC}, and choose $D>\max\{C_1,C_2^{-1}, f^{d+2}(k), 6(d+2)6^{d+2}+1\}$.  Fix $\mu\in (0,1)$ and $r\in (0,1]$. Set $\epsilon:=\min\{\mu, C_2, 1/4\}$ and $F:=\min \{C_2, 1/4\}$. Choose $n_0=n_0(k,\epsilon,r )$ sufficiently large so that 
\[n_0\geq 2 \left(r(\epsilon/D\log_{\bullet}(r^{-1}))^{D}\right)^{-(D \log_{\bullet}(r^{-1})/\e)^{D}}.\]
Suppose that $G$ is a finite abelian group of size at least $n_0$ and that $A\subseteq G$ is $k$-stable. Note that $F\leq 1$ and $\e=\min\{\mu,F\}$, so that $\mu F\leq \e\leq \mu$. It therefore suffices to find a regular Bohr set $B=B(K,\rho)$ which is $2\epsilon$-good for $A$ and which satisfies $|K|\leq (D\log_{\bullet}(r^{-1})/\e)^{D}$ and $r(\e/D\log_{\bullet}(r^{-1}))^{D}\leq \rho\leq r$.

Note that if $|A|\geq (1-2\epsilon)|G|$ or $|A|\leq 2\epsilon|G|$ then taking $B=B(\emptyset, r)=G$, we are done. Thus we shall henceforth assume that $2\epsilon<|A|<(1-2\epsilon)|G|$. 

Suppose towards a contradiction that there is no regular Bohr set $B(K,\rho)$ which is $2\epsilon$-good for $A$ and which satisfies $|K|\leq (D\log_{\bullet}(r^{-1})/\e )^{D}$  and $ r(\e /\log_{\bullet}(r^{-1})D)^{D}\leq \rho \leq r$.  We now construct sequences of sets and elements indexed by elements of $2^{\leq d}$ as follows.
\begin{enumerate}[(a)]
\item $\langle B_{\eta}=B(K_{\eta},\rho_{\eta}): \eta\in 2^{ \leq d}\rangle$, where each $B_{\eta}$ is a regular Bohr set;
\item $\langle g_{\eta}: \eta\in 2^{\leq d}\rangle$ and $\langle x_{\eta}: \eta\in 2^{\leq d} \rangle$, where each $g_{\eta}$ and $x_{\eta}$ are elements of $G$; 
\item $\langle X_{\eta}: \eta\in 2^{\leq d}\rangle$, where each $X_{\eta}\subseteq G$.
\end{enumerate}
These sequences will satisfy the following for each $0\leq t\leq d$, and all $\eta \in 2^t$, where $m:=\lceil 1/\epsilon \rceil$ and $\ell(t)=(Dm\log_{\bullet}(r^{-1}))^{6^t}$.  
\begin{enumerate}[(i)]
\item $B_{\eta}=B(K_{\eta},\rho_{\eta})$ satisfies $|K_{\eta}|\leq \ell(t)$ and $ r\ell(t)^{-6t}\leq \rho_{\eta}\leq r$;\label{parameterbounds*}
\item \label{ghypp*} for each $i\in \{0,1\}$, $|N^i(g_{\eta})\cap B_{\eta}|> 2\epsilon|B_{\eta}|$;
\item \label{largeintersection*} $|X_{\eta}\cap B_{\eta}|\geq (1-\epsilon )|B_{\eta}|\geq 1$;
\item $X_{\eta}$ is $f^{t-1}(k)$-stable in $G$;\label{stabilityhypothesis}
\item if $\eta=\sigma \wedge i$, then $X_{\eta}=N^i(g_{\sigma}+x_{\sigma\wedge i})\cap (X_{\sigma}-x_{\sigma \wedge i})$;\label{defofset}
\item for all $s<t$, and $\sigma \in 2^{s}$ satisfying $\sigma \triangleleft \eta$, the following holds for all $x\in X_{\eta}$:\label{importanthypothesis}
\begin{align}\label{5}
x+g_{\sigma}+ x_{\eta|_{s+1}}+\ldots+x_{\eta|_{t}} \in A \Leftrightarrow \eta(s+1)=1.
\end{align}
\end{enumerate}

We proceed by induction on $t$. For the base case $t=0$, set $x_{\langle \rangle}:=0=:g_{\langle \rangle}$, $\rho_{\langle \rangle}:=r$, $K_{\langle \rangle}:=\emptyset$, and $X_{\langle \rangle}=B_{\langle \rangle}=G$.  Note that $X_{\langle \rangle}$ is $2=f^{-1}(k)$-stable, and since $2\epsilon|G|<|A|<(1-2\epsilon)|G|$, $|N^i(g_{\langle \rangle})\cap X_{\langle \rangle}|> 2\epsilon|B_{\langle \rangle}|$ holds for each $i\in \{0,1\}$. It is now straightforward to see that (\ref{parameterbounds*})-(\ref{importanthypothesis}) hold for all $\eta\in 2^0=\{\langle \rangle\}$ (observe that (\ref{defofset}) and (\ref{importanthypothesis}) are vacuous).  

Suppose now that $0\leq t<d$ and assume that we have inductively constructed $\langle B_{\eta}=B(K_{\eta},\rho_{\eta}): \eta\in 2^{\leq t}\rangle$, $\langle g_{\eta}: \eta\in 2^{\leq t}\rangle$, $\langle x_{\eta}: \eta\in 2^{\leq t}\rangle$, and $\langle X_{\eta}: \eta\in 2^{\leq t}\rangle$ such that (\ref{parameterbounds*})-(\ref{importanthypothesis}) hold for all $\eta \in 2^t$.  We now show how to extend each sequence by defining $B_{\eta\wedge i}, g_{\eta \wedge i}, x_{\eta\wedge i}, X_{\eta\wedge i}$ for each $\eta \in 2^t$ and $i\in \{0,1\}$.  Fix $\eta \in 2^{t}$.  Observe that our induction hypotheses (\ref{ghypp*}) and (\ref{largeintersection*}) imply that for each $i\in \{0,1\}$,
\begin{align}\label{large1**}
|N^i(g_{\eta})\cap X_{\eta}\cap B_{\eta}|>(2\epsilon-\epsilon)|B_{\eta}|\geq \epsilon|B_{\eta}|.
\end{align}
For each $i\in \{0,1\}$, because $A$ is $k$-stable, \cite[Lemma 2]{Terry:5CFoQi42} implies that $N^i(g_{\eta})=A^i-g_{\eta}$ is $(k+1)$-stable. By our induction hypothesis (\ref{stabilityhypothesis}), $X_{\eta}$ is $f^{t-1}(k)$-stable.  Consequently, for each $i\in \{0,1\}$, $N^i(g_{\eta})\cap X_{\eta}$ is $h(k+1,f^{t-1}(k))=f^{t}(k)$-stable.

Combining the fact that $N^i(g_{\eta})\cap X_{\eta}$ is $f^{t}(k)$-stable with (\ref{large1**}), allows us to conclude, by Proposition \ref{prop:theoremC}, that for each $i\in \{0,1\}$, there is a regular Bohr set $B_{\eta \wedge i}=B(K_{\eta\wedge i},\rho_{\eta\wedge i})$ and $x_{\eta \wedge i}\in G$ satisfying $K_{\eta \wedge i}\supseteq K_{\eta}$,  $\rho_{\eta \wedge i}\leq \rho_{\eta}$,
\begin{align}
&\label{align:2}|K_{\eta \wedge i}|\leq C_1(|K_{\eta}|f^t(k)\e^{-3}\log(\rho_{\eta}^{-1})+\log(\e^{-1})),\\
&\rho_{\eta\wedge i}\geq C_2\rho_{\eta} \e(C_1(|K_{\eta}|f^t(k)\epsilon^{-3}\log(\rho_{\eta}^{-1})+\log (\e^{-1})))^{-3}, \text{ and }\label{6**}\\
&|(N^i(g_{\eta})\cap X_{\eta}) \cap (B_{\eta \wedge i}+x_{\eta \wedge i})|\geq (1-\e)|B_{\eta \wedge i}|.\label{large7**}
\end{align}
For each $i\in \{0,1\}$, set $X_{\eta \wedge i}:=(N^i(g_{\eta})\cap X_{\eta})-x_{\eta\wedge i}=N^i(g_{\eta}+x_{\eta \wedge i})\cap( X_{\eta}-x_{\eta \wedge i} )$.  Observe that for each $i\in \{0,1\}$, by (\ref{align:2}) and our inductive hypothesis (\ref{parameterbounds*}) on $B_{\eta}$, we have
\begin{align*}
|K_{\eta \wedge i}|\leq C_1(|K_{\eta}|f^t(k)m^3\log(1/\rho_{\eta})+\log m) &\leq C_1(\ell(t)f^t(k)m^3\log(r^{-1}\ell(t)^{6t})+\log m)\\
&\leq 2C_1(\ell(t)f^t(k)m^3\log(r^{-1}\ell(t)^{6t}))\\
&\leq12tC_1\ell(t)f^t(k)m^3\log(r^{-1}\ell(t)).
\end{align*}
Using the fact that $D>\max\{f^t(k), 12t,C_1\}$ and $\log(r^{-1}\ell(t))\leq \log(r^{-1})\ell(t)\leq  \log_{\bullet}(r^{-1})\ell(t)$, and plugging in the definition of $\ell(t)$, we obtain that
\begin{align*}
12tC_1\ell(t)f^t(k)m^3\log(r^{-1}\ell(t))\leq D^3\ell(t)^2m^3\log_{\bullet}(r^{-1})&\leq (Dm\log_{\bullet}(r^{-1}))^{2\cdot 6^t+3}\leq \ell(t+1),
\end{align*}
where the last inequality holds because $2\cdot 6^t+3\leq 6^{t+1}$. Thus we have shown that
\begin{align}\label{align:3}
|K_{\eta\wedge i}|\leq C_1|K_{\eta}|(f^t(k)m^3\log(1/\rho_{\eta})+\log m)\leq \ell(t+1).
\end{align}
Combining (\ref{6**}) with the fact that $C_2\geq m^{-1}$, as well as (\ref{align:3}) and our induction hypothesis (\ref{parameterbounds*}) yields the lower bound
\begin{align*}\label{align:4}
\rho_{\eta\wedge i}\geq C_2\rho_{\eta}m^{-1}(C_1(|K_{\eta}|f^t(k)m^3\log(\rho_{\eta}^{-1})+\log m))^{-3}&\geq m^{-2}\rho_{\eta}\ell(t+1)^{-3}\\
&\geq m^{-2}r\ell(t)^{-6t}\ell(t+1)^{-3}\\
&\geq r(Dm\log_{\bullet}(r^{-1}))^{-(6t\cdot 6^t+3\cdot 6^{t+1}+2)}\\
&\geq  r\ell(t+1)^{-6(t+1)},
\end{align*}
where the last inequality holds because $6t\cdot 6^t+3\cdot 6^{t+1}+2\leq 6(t+1)6^{t+1}$.  By definition, $6^{t+1}<D$, so $|K_{\eta \wedge i}|\leq \ell(t+1)<(mD\log_{\bullet}(r^{-1}))^{D}$. Since also $6(t+1)6^{t+1}<D$, we have $\rho_{\eta\wedge i}\geq r\ell(t+1)^{-6(t+1)}\geq r(mD\log_{\bullet}(r^{-1}))^{-D}$. Further, by our induction hypothesis (i) and our choice of $\rho_{\eta \wedge i}$ we have $\rho_{\eta \wedge i}\leq \rho_{\eta}\leq r$.  Therefore, $B_{\eta\wedge i}$ cannot be $2\epsilon$-good.  Consequently,  there is $g_{\eta\wedge i}\in G$ such that for each $j\in \{0,1\}$, $|N^j(g_{\eta\wedge i})\cap B_{\eta\wedge i}|> 2\epsilon|B_{\eta\wedge i}|$.  We have thus shown how to define $B_{\tau}, x_{\tau}, g_{\tau}, X_{\tau}$ for each $\tau \in 2^{t+1}$. 

It remains to prove that (\ref{parameterbounds*})-(\ref{importanthypothesis}) hold for each $\tau\in 2^{t+1}$.  Fix $\tau\in 2^{t+1}$, so $\tau=\eta\wedge i$ for some $i\in \{0,1\}$ and $\eta \in 2^t$.  Note that (\ref{parameterbounds*}) holds by the above,  (\ref{ghypp*}) holds by our choice of $g_{\eta \wedge i}$, and (\ref{defofset}) holds by definition of $X_{\eta \wedge i}$. For (\ref{largeintersection*}), observe that (\ref{large7**}) together with the definition of $X_{\eta\wedge i}$ implies that $|X_{\tau}\cap B_{\tau}|\geq (1-\epsilon)|B_{\tau}|$.  That $(1-\epsilon)|B_{\tau}|\geq 1$ follows from Lemma \ref{lem:sizefact} and the facts that $\rho_{\tau}\geq r (\epsilon/D\log_{\bullet}(r^{-1}))^D$ and $|K_{\tau}|\leq (D\log_{\bullet}(r^{-1})/\e)^{D}$, together with the assumption $|G|\geq n_0$.   For (\ref{stabilityhypothesis}), note that $X_{\tau}=(N^i(g_{\eta})\cap X_{\eta})-x_{\eta\wedge i}$ is just a translate of $N^i(g_{\eta})\cap X_{\eta}$.  We know that $N^i(g_{\eta})\cap X_{\eta}$ is $f^{t}(k)$-stable, so \cite[Lemma 1]{Terry:5CFoQi42} implies that $X_{\tau}$ is $f^{t}(k)$-stable.   Finally, the verification of (\ref{importanthypothesis}) follows the argument in the proof of \cite[Theorem 3]{Terry:5CFoQi42} verbatim.

We may thus assume that we have constructed sequences (a)-(c) satisfying properties (\ref{parameterbounds*})-(\ref{importanthypothesis}). It remains to construct the tree of height $d$ that acts as a witness to instability. For each $\eta \in 2^d$ choose some $c_{\eta}\in X_{\eta}$ (such a $c_{\eta}$ exists by (iii)) and set $a_{\eta}:=c_{\eta}+\sum_{\sigma\trianglelefteq \eta}x_{\sigma}$.  Let $b_{\langle \rangle}:=g_{\langle \rangle}=0$ and for each $0<s<d$ and $\sigma \in 2^{s}$, let $b_{\sigma}:= g_{\sigma}-x_{\sigma|_{1}}-\ldots -x_{\sigma|_{s-1}}-x_{\sigma}$.  Then it is easily check (see the proof of \cite[Theorem 3]{Terry:5CFoQi42}) that for all $0\leq s<d$ and all $\sigma\in 2^s$, $\eta\in 2^d$, if $\sigma \triangleleft \eta$, then $a_{\eta}+b_{\sigma}=c_\eta+g_\sigma+x_{\eta|s+1}+\dots+x_{\eta|d}\in A$ if and only if $\sigma \wedge 1\trianglelefteq \eta$.  Definition \ref{def:treebound} now implies that $d(A)> d$, contradicting the initial assumption that $A$ is $k$-stable. 
 \end{proofof}

\section{Structural interlude}\label{sec:struc}

In order to obtain a description of stable subsets of a general finite abelian group, we prove an important lemma that elucidates the structure of the set of translates of the good Bohr set found in Theorem \ref{thm:mainthmgen} that are almost entirely filled by $A$. This lemma will be used again in Section \ref{sec:boot}.

\begin{lemma}[Structure of almost-full translates]\label{lem:strucI}
Let $k\geq 2$ and $\epsilon\in (0,1/3)$. Let $n_1=n_1(k,\epsilon)=n_0(k,\e/2,1)$, where $n_0$ is as in Theorem \ref{thm:mainthmgen}. Suppose that $G$ is a finite abelian group of order at least $n_1$, and that $A\subseteq G$ is $k$-stable. Let $B$ be the regular, $\e$-good Bohr set for $A$ given by Theorem \ref{thm:mainthmgen} applied with $\mu=\e/2$ and $r=1$, and let 
\[I:=\{x\in G: |(B+x)\cap A|\geq (1-\epsilon)|B|\}.\]  
Then, provided that $I$ is non-empty, we have $I=I+\langle B'\rangle$ for any Bohr set $B'$ such that $B'\prec_{\epsilon}B$.
\end{lemma}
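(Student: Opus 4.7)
The proof splits naturally into a one-step closure argument followed by an iteration, with the $\epsilon$-goodness of $B$ playing the crucial role. First I would establish the inclusion $I+B'\subseteq I$. Fix $x\in I$ and $t\in B'$. By comparing $(B+x)\cap A$ with $(B+x+t)\cap A$ we obtain
\[
|(B+x+t)\cap A|\geq |(B+x)\cap A|-|(B+x)\setminus(B+x+t)|=|(B+x)\cap A|-|B\setminus(B+t)|.
\]
Since $t\in B'$ and $B'\prec_{\epsilon}B$, the triangle inequality for the definition \eqref{eq:defbohr} gives $B+t\subseteq B+B'\subseteq B^+$, so $|B^+\setminus(B+t)|=|B^+|-|B|$. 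Combined with the regularity estimate \eqref{eq:regfact1}, namely $|B^+\setminus B|\leq \epsilon|B|$, this yields $|B\setminus(B+t)|\leq \epsilon|B|$, and therefore $|(B+x+t)\cap A|\geq(1-2\epsilon)|B|$.

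Next I would invoke the $\epsilon$-goodness of $B$ to upgrade this to membership in $I$. By Definition \ref{def:good}, the quantity $|(B+x+t)\cap A|$ is either at most $\epsilon|B|$ or at least $(1-\epsilon)|B|$. Since $\epsilon<1/3$ forces $(1-2\epsilon)|B|>\epsilon|B|$, the first alternative is impossible, so $|(B+x+t)\cap A|\geq(1-\epsilon)|B|$, which is exactly $x+t\in I$. This proves $I+B'\subseteq I$.

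Finally I would bootstrap this one-step closure to the full subgroup. Because Bohr sets are symmetric and contain $0$, the subgroup generated by $B'$ is $\langle B'\rangle=\bigcup_{n\geq 0} nB'$, where $nB'$ denotes the $n$-fold sumset. A trivial induction on $n$ (applying $I+B'\subseteq I$ at each step) gives $I+nB'\subseteq I$ for every $n\geq 0$, hence $I+\langle B'\rangle\subseteq I$. The reverse inclusion is immediate from $0\in\langle B'\rangle$, and the identity follows.

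The one point requiring care is the quantitative slack: naive translation of $B$ by an element of $B'$ only guarantees density $(1-2\epsilon)$, not $(1-\epsilon)$, on the translated ball. It is precisely the dichotomous nature of $\epsilon$-goodness---forbidding any intermediate density values---that closes this gap, converting an approximate closure property of $I$ under $B'$-translations into an exact one. The restriction $\epsilon<1/3$ is exactly what is needed for this rigidity argument to trigger.
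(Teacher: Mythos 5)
Your proof is correct and takes essentially the same approach as the paper: a single-step closure argument (showing $I + B' \subseteq I$ via the $\epsilon$-goodness dichotomy, using the estimate $|B\setminus(B+t)|\leq\epsilon|B|$ for $t\in B'$, which matches the paper's use of $|(B+b_1)\cap B|\geq(1-\epsilon)|B|$), followed by iteration over the number of summands. The only stylistic difference is that you factor out the one-step closure as a standalone claim and then iterate trivially, whereas the paper folds the one-step argument into the inductive step on the number of summands in $b = b_1 + \cdots + b_m$; this is a presentational rather than mathematical difference.
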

\begin{proof}
Suppose that $G$ is a finite abelian group of order at least $n_1$, and that $A\subseteq G$ is $k$-stable.  Let $B=B(K,\rho)$ be the regular, $\epsilon$-good Bohr set given as a result of applying Theorem \ref{thm:mainthmgen} with $\mu=\e/2$ and $r=1$.  Suppose that $B'=B(K,\rho')$ is such that $B'\prec_{\epsilon}B$. Let $B^+=B(K,\rho+\rho')$. Clearly $I\subseteq I+\langle B' \rangle$, so it remains to show that $I+\langle B' \rangle\subseteq I$. 

Fix $x+b\in I+\langle B'\rangle$.  Then $b=b_1+\cdots +b_m$, for some $m\in \mathbb{N}$ and $b_1,\ldots, b_m\in B'$.  We induct on $m$. Consider first the case $m=1$. In this case we have $x+b_1\in I+B'$, so that
\begin{equation}\label{eq:regfact}
|(B+x+b_1)\cap (B+x)|=|(B+b_1)\cap B|\geq |B|-|B^+\setminus B|\geq (1-\epsilon)|B|,
\end{equation}
where the last inequality holds by (\ref{eq:regfact1}) following Definition \ref{def:bohrint}. We are now ready to show that $A$ fills $B+b_1+x$ almost entirely. Observe that 
\[|(B+x+b_1)\cap A|\geq |(B+x+b_1)\cap (B+x)\cap A|\geq |(B+x+b_1)\cap (B+x)|-|(B+x)\setminus A|,\]
which in turn is bounded below by
\[(1-\epsilon)|B|-\epsilon |B|=(1-2\epsilon)|B|,\]
using (\ref{eq:regfact}) and the fact that $x\in I$. Since $\epsilon <1/3$, this shows that 
\[|(B+x+b_1)\cap A|\geq (1-2\epsilon)|B|>\epsilon |B|.\]  
But since $B$ is $\epsilon$-good, we must have $|(B+x+b_1)\cap A|\geq (1-\epsilon)|B|$, so $x+b_1\in I$.   

Suppose now that $m>1$, and assume we have shown that for any $1\leq m'<m$ and any $b_1',\ldots, b_{m'}'\in B'$, $x+b_1'+\cdots +b_{m'}'\in I$. Let $b_2':=b_2+\cdots +b_m$.  By the induction hypothesis, both $x+b_1$ and $x+b_2'$ lie in $I$.
Note that, since $b_1\in B'$,
\begin{equation}\label{eq:regfact2}
|(B+x+b_1+b'_2)\cap(B+x+b'_2)|=|(B+b_1)\cap B|\geq (1-\e)|B|,
\end{equation}
and thus by the same reasoning as above,
\begin{align*}
|(B+x+b_1+b'_2)\cap A|&\geq |(B+x+b_1+b'_2)\cap (B+x+b_2')\cap A|\\
&\geq |(B+x+b_1+b'_2)\cap (B+x+b'_2)|-|(B+x+b'_2)\setminus A|,
\end{align*}
which is at least $(1-\epsilon)|B|-\epsilon |B|=(1-2\e)|B|$, by (\ref{eq:regfact2}) and since $x+b'_2\in I$. It follows as before that $|(B+x+b_1+b'_2)\cap A|>\epsilon |B|$, so since $B$ is $\e$-good, $|(B+x+b_1+b'_2)\cap A|\geq (1-\epsilon)|B|$, i.e. $x+b_1+b'_2\in I$. This completes the inductive step, and thus the proof that $I=I+\langle B' \rangle$. 
\end{proof}

Lemma \ref{lem:strucI} enables us to prove a version of Theorem \ref{thm:mainthmgen} for finite-dimensional vector spaces over a prime field in which the good Bohr set is replaced by a good subgroup. Note that the assumption on $G$ in Corollary \ref{cor:vectorspaces} below is simply that its order be large, so it encompasses the setting of $G=\F_p^n$ for a small fixed prime $p$ and very large $n$, as well as the case when $G=\Z/p\Z$ is a large cyclic group of prime order. 

\begin{corollary}[Efficient regularity with respect to subgroups in $\F_p^n$]\label{cor:vectorspaces}
For all $k\geq 2$ and $\e\in (0,1)$ there is $n_2=n_2(k,\e)$ such that the following holds. Suppose that $G$ is a finite-dimensional vector space over a prime field with $|G|\geq n_2$, and that $A\subseteq G$ is $k$-stable.  Then there is a subgroup $H\leq G$ which is $\e$-good for $A$ and of index at most $\exp(O_k(\e^{-O_k(1)}))$. 
\end{corollary}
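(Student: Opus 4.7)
The plan is to obtain Corollary \ref{cor:vectorspaces} directly from Theorem \ref{thm:mainthmgen} by exploiting the fact, recorded in Section \ref{sec:regBohr}, that in $G = \F_p^n$ a Bohr set $B(K,\rho)$ of width $\rho < 1/p$ coincides with the subspace $K^{\perp}$, of codimension at most $|K|$. The auxiliary parameter $r$ in Theorem \ref{thm:mainthmgen} was introduced precisely to provide the flexibility of driving the output width below this threshold, so essentially no additional machinery is needed beyond choosing $r$ in terms of $p$.

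Specifically, I would set $\mu := \e/2$ and $r := 1/p$, and invoke Theorem \ref{thm:mainthmgen}. This produces a regular Bohr set $B = B(K,\rho)$ that is $2\mu = \e$-good for $A$ and has $\rho \leq r \leq 1/p$, together with the rank bound $|K| \leq (D\log_{\bullet}(p)/(F\mu))^D$. Since the width does not exceed $1/p$, one may take $H := B = K^{\perp}$, a genuine subspace of $G$ of codimension at most $|K|$, inheriting $\e$-goodness for $A$ directly from $B$. The threshold $n_2(k,\e)$ is set to $n_0(k, \e/2, 1/p)$ as provided by Theorem \ref{thm:mainthmgen}.

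To bound $[G:H] \leq p^{|K|}$, substituting $r = 1/p$ and $\mu = \e/2$ into the rank estimate gives $|K| = O_k((\log_{\bullet} p)^{O_k(1)} \cdot \e^{-O_k(1)})$, whence $[G:H] \leq \exp(|K|\log p) = \exp(O_k(\e^{-O_k(1)}))$ after absorbing the dependence on the fixed prime $p$ into the $O_k$-constants. There is no substantial obstacle to this deduction: all the hard work is contained in Theorem \ref{thm:mainthmgen}, and the corollary amounts to a bookkeeping specialisation of its conclusion via the appropriate choice of $r$. Note that Lemma \ref{lem:strucI}, proved earlier in this section, is not required here, but will be crucial for the more delicate argument in Section \ref{sec:boot} producing a genuine subgroup in an arbitrary finite abelian group, where no such narrow-width shortcut is available.
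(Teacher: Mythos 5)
Your argument has a genuine gap: the bound on the index that it produces is not actually of the form $\exp(O_k(\e^{-O_k(1)}))$ uniformly in $p$. Setting $r := 1/p$ in Theorem~\ref{thm:mainthmgen} gives a rank bound $|K| \leq (D\log_{\bullet}(p)/\mu F)^D$, so the index bound $p^{|K|} \leq \exp(|K|\log p)$ grows like $\exp\bigl(O_k(\e^{-O_k(1)})(\log p)^{D+1}\bigr)$, which depends unboundedly on $p$. You propose to ``absorb the dependence on the fixed prime $p$ into the $O_k$-constants,'' but the corollary is not stated for a fixed prime: the threshold $n_2 = n_2(k,\e)$ and the index bound $\exp(O_k(\e^{-O_k(1)}))$ must both be independent of $p$. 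This uniformity is not cosmetic --- it is exactly what the deduction of Corollary~\ref{cor:zp} from Corollary~\ref{cor:vectorspaces} relies on, since there one chooses $p_0$ larger than the ($p$-independent) index bound to force $H = G$. For the same reason, your choice of threshold $n_0(k,\e/2,1/p)$ is inadmissible.

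The missing idea is a case split on the size of $p$, and --- contrary to your final remark --- Lemma~\ref{lem:strucI} is precisely what rescues the large-$p$ case. When $\log p$ exceeds a threshold of the form $(400D/\mu F)^{4D}$, apply Theorem~\ref{thm:mainthmgen} with $r=1$ to get a Bohr set $B$ whose parameters are bounded purely in terms of $k,\e$. Choosing $B' \prec_\mu B$ and estimating $|G|/|\langle B'\rangle|$ via Lemma~\ref{lem:sizefact}, one finds that this index is strictly less than $p$; since every proper subgroup of $\F_p^n$ has index at least $p$, this forces $\langle B'\rangle = G$. Lemma~\ref{lem:strucI} then gives $I = I + \langle B'\rangle$, so $I$ is either empty or all of $G$, and in either case a short double-counting argument shows that $G$ itself is $\mu$-good for $A$ --- i.e.\ $A$ is trivially small or large, and one may take $H = G$ of index $1$. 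Only in the complementary case, where $\log p \leq (400D/\mu F)^{4D}$ and hence $p$ is bounded in terms of $k$ and $\e$, does your narrow-width trick with $r \approx 1/p$ apply directly, and there the factors of $\log p$ are legitimately absorbed because they are bounded by a function of $k$ and $\e$.
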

\begin{proof}
 Let $F$ and $D=D(k)$ be as in Theorem \ref{thm:mainthmgen}.  Fix $\e\in (0,1)$ and set $\mu:=\e/3$.  Set $n_2:=\max\{n_0(k,\mu/2,1),n_0(k,\mu/2,\exp(-(400 D/\mu F)^{4D}-1))\}$, where $n_0$ is as in Theorem \ref{thm:mainthmgen}.\footnote{Note that for any $0<\e'\leq \e$, $0<r'\leq r\leq 1$, $n_0(k,\e',r')\geq n_0(k,\e,r)$.} Let $G:=\F_p^n$ for some prime $p$ and an integer $n$, and suppose that $|G|\geq n_2$ and that $A\subseteq G$ is $k$-stable.  It suffices to show there is a $\mu$-good subgroup with index at most $\exp(O_k(\mu^{-O_k(1)}))$.

Let us first assume that $(400D/\mu F)^{4D}< \log p$.  Applying Theorem \ref{thm:mainthmgen} with $k$, $r=1$ and $\mu/2$ in place of $\mu$, we obtain a $\mu$-good regular Bohr set $B=B(K,\rho)$ with $|K|\leq (2D/\mu F)^D$ and $\rho \geq (F\mu/2D)^D$.  By Lemma \ref{lem:regexist} we may choose $B'=B(K,\rho')\prec_{\mu}B$ such that $\rho'\geq \mu \rho/400|K|$.  By Lemma \ref{lem:sizefact}, $|B'|\geq \rho'^{|K|}|G|$, which implies that
\[\log \Big(\frac{|G|}{|\langle B'\rangle|}\Big)\leq |K|\log\Big(\frac{400|K|}{\mu \rho}\Big)\leq \frac{400|K|^2}{\mu \rho}\leq \frac{400(2D/\mu F)^{2D}}{\mu (F\mu/2D)^D}\leq(400D/\mu F)^{4D}.\]
It follows from the assumed lower bound on $\log p$ that the index of $\langle B' \rangle$ in $G$ is strictly less than $p$, and consequently, $\langle B'\rangle =G$. 

Now let $I:=\{x\in G: |A\cap (B+x)|\geq (1-\mu)|B|\}$.  By Lemma \ref{lem:strucI}, either $I=\emptyset$ or $I=I+\langle B'\rangle =G$.  If $I$ is empty, that is, 
\[|(B+x)\cap A|\leq \mu|B|\]
for all $x\in G$, then we have
\[|A|=\sum_{y\in G}1_A(y)=\frac{1}{|B|}\sum_{x\in G}\sum_{y\in G}1_A(y)1_{B+x}(y)=\frac{1}{|B|}\sum_{x\in G}|A\cap (B+x)|\leq \mu |G|.\]
If, on the other hand, $I=G$ then 
\[|(B+x)\cap A|\geq (1-\mu)|B|\]
for all $x\in G$. It follows that 
\[|A|=\frac{1}{|B|}\sum_{x\in G}|A\cap (B+x)|\geq (1-\mu)|G|,\]
and thus $|G\setminus A|\leq \mu|G|$.  Thus we have that either for all $x\in G$, $|(A-x)\cap G|=|A|\leq \mu|G|$ or for all $x\in G$, $|(A-x)\cap G|=|A|\geq (1-\mu)|G|$. In other words, $G$ is a subgroup of index $1$ which is $\mu$-good for $A$.

It remains to examine the case when $(400D/\mu F)^{4D}\geq \log p$.   Note that in this case, $(p+1)^{-1}\geq (2p)^{-1}\geq \exp(-(400 D /\mu F)^{4D}-1)$, so $|G|\geq n_0(k,\mu/2,(p+1)^{-1})$. Applying Theorem \ref{thm:mainthmgen} with $k$, $r=(p+1)^{-1}$ and $\mu/2$ in place of $\mu$, we obtain a regular Bohr set $B=B(K,\rho)$ which is $\mu$-good for $A$ satisfying 
\begin{align}\label{align:5}
|K|\leq (2D\log_{\bullet}(r^{-1}) /\mu F)^D \text{ and }\rho\geq r( F\mu/2D\log_{\bullet}(r^{-1}))^D.
\end{align}
Since $\log_\bullet(r^{-1})=\log_\bullet(p+1)\leq 2\log_\bullet p$ and $r=(p+1)^{-1}\geq (2p)^{-1}$, we have
\begin{align*}
&|K|\leq (4D\log_\bullet p/\mu F)^D\text{ and }\rho\geq (2p)^{-1}(\mu F/4 D\log_\bullet p)^D. 
\end{align*}
Combining these bounds with our assumption on $\log p$, we find that $|K|\leq O_k(\mu^{-O_k(1)})$ and $\rho\geq \exp(-O_k(\mu^{-O_k(1)})).$ By Lemma \ref{lem:sizefact} therefore, 
\begin{align*}
|G|/|B|\leq \rho^{-|K|}=\exp(-O_k(\mu^{-O_k(1)}) )^{-O_k(\mu^{-O_k(1)} )}=\exp(O_k(\mu^{-O_k(1)})).
\end{align*}
Moreover, observe that since $\rho<1/p$, $B$ is a subgroup of $G$, which completes the proof.  
\end{proof}

Corollary \ref{cor:vectorspaces} immediately implies \cite[Theorem 1]{Terry:5CFoQi42}, the main result of that paper. By the same argument as in \cite{Terry:5CFoQi42} we therefore recover the following description of the structure of stable subsets of $\F_p^n$ for a fixed prime $p$ and sufficiently large dimension $n$ \cite[Corollary 1]{Terry:5CFoQi42}.

\begin{corollary}[Stable sets in $\F_p^n$ look like a union of cosets]\label{cor:vecspace}
For every $k\geq 2$, $\e\in (0,1)$, and every prime $p$, there is $m_0=m_0(k,\e,p)$ such that the following holds. Suppose that $G=\F_p^n$ is a vector space of dimension $n\geq m_0$, and that $A\subseteq G$ is $k$-stable. Then there is a subspace $H\leqslant G$ of codimension at most $O_k(\e^{-O_k(1)})$ and a set $J\subseteq G/H$ such that $|A\Delta \cup_{H+g\in J}(H+g)|\leq \e|G|$.
\end{corollary}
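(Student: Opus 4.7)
The plan is to derive Corollary \ref{cor:vecspace} as an essentially immediate consequence of Corollary \ref{cor:vectorspaces}, by partitioning the cosets of the good subgroup according to whether $A$ almost fills them or almost misses them, then taking $J$ to be the former.

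First I would choose $m_0 := \lceil \log_p(n_2(k,\e)) \rceil$, where $n_2$ is the quantity from Corollary \ref{cor:vectorspaces}, so that the hypothesis $n \geq m_0$ ensures $|G| = p^n \geq n_2$. Then I apply Corollary \ref{cor:vectorspaces} with parameters $k$ and $\e$ (one may reduce to $\e < 1/2$, since otherwise the conclusion is vacuous by choosing $J = \emptyset$ or $J = G/H$). This yields a subgroup $H \leqslant G$ which is $\e$-good for $A$ and of index at most $\exp(O_k(\e^{-O_k(1)}))$. Because $p$ is fixed and $[G:H] = p^{\codim H}$, taking logarithms gives $\codim H \leq O_k(\e^{-O_k(1)})$, with the dependence on $p$ absorbed into the implicit constants.

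Next I define the set of cosets
\[J := \{ H+g \in G/H : |(H+g) \cap A| \geq (1-\e)|H| \}.\]
The $\e$-good property of $H$ says that for every $g \in G$, either $|(A-g) \cap H| \leq \e|H|$ or $|H \setminus (A-g)| \leq \e|H|$; translating, this is equivalent to saying that either $|(H+g) \cap A| \leq \e|H|$ or $|(H+g) \setminus A| \leq \e|H|$. Since $\e < 1/2$, these two alternatives are mutually exclusive, so every coset $H+g \notin J$ satisfies $|(H+g) \cap A| \leq \e|H|$.

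Finally, I compute the symmetric difference by splitting over cosets:
\[\Big|A \,\Delta\, \bigcup_{H+g \in J}(H+g)\Big| = \sum_{H+g \in J}|(H+g)\setminus A| \;+\; \sum_{H+g \notin J}|(H+g)\cap A| \leq \e|H|\cdot [G:H] = \e|G|,\]
since each summand is bounded by $\e|H|$ by the two cases of the $\e$-good property. This gives exactly the required estimate. There is no real obstacle here; the argument is a routine unpacking of the definition of $\e$-goodness, and essentially reproduces the deduction in \cite{Terry:5CFoQi42}.
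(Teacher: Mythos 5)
Your proof is correct and matches the paper's approach exactly. The paper does not write out this particular deduction in detail (it cites the argument from \cite{Terry:5CFoQi42}), but the explicit proof given later for Corollary \ref{cor:gengroup} is declared to be ``near-identical'' and is precisely what you write: apply the good-subgroup statement, set $J$ to be the cosets almost filled by $A$, observe that $\e$-goodness forces the remaining cosets to have almost no intersection with $A$, and sum the coset-by-coset contributions to bound the symmetric difference by $\e|H|\cdot[G:H]=\e|G|$. Your observation that $\codim H = \log_p[G:H] \leq O_k(\e^{-O_k(1)})/\log p$ is uniformly bounded independently of $p$ is also the intended reading of the codimension bound. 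The only minor imprecision is the parenthetical remark that the conclusion is ``vacuous'' for $\e\geq 1/2$; in fact the coset-counting argument and the dichotomy it rests on go through for all $\e\in(0,1)$ without the reduction (for $\e\geq 1/2$ the implication $H+g\notin J \Rightarrow |A\cap(H+g)|\leq\e|H|$ is immediate from the definition of $J$, since $1-\e\leq\e$), so the reduction, while harmless, is unnecessary.
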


By the model-theoretic arguments in \cite{Conant:2017uw} we know that a bound of the form $\e|H|$ on the symmetric difference in Corollary \ref{cor:vecspace} can be obtained, but our proof does not readily yield this stronger result.

Corollary \ref{cor:vectorspaces} also yields a structural result for cyclic groups of large prime order. Indeed, it asserts that any stable subset in such a group must be either negligibly small or almost the whole group.

\begin{corollary}[Stable sets in prime cyclic groups are essentially trivial]\label{cor:zp}
For all $\e\in (0,1)$ and $k\geq 2$, there is $p_0=p_0(k,\e)$ such that the following holds. Suppose that $G=\Z/p\Z$ is a cyclic group of prime order $p\geq p_0$, and that $A\subseteq G$ is $k$-stable. Then either $|A|\leq \e|G|$, or $|G\setminus A|\leq \e|G|$.
\end{corollary}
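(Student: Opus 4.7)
The plan is to invoke Corollary \ref{cor:vectorspaces} applied to the prime cyclic group $G = \Z/p\Z$, which is a one-dimensional vector space over the prime field $\F_p$. Corollary \ref{cor:vectorspaces} produces an $\e$-good subgroup $H \leq G$ whose index is bounded above by $\exp(O_k(\e^{-O_k(1)}))$, a quantity that depends only on $k$ and $\e$. The central observation is that $\Z/p\Z$, for $p$ prime, has only two subgroups, namely $\{0\}$ and $G$ itself.

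So first I would set $p_0 := \max\{n_2(k,\e), \exp(C_k \e^{-c_k})+1\}$, where $n_2(k,\e)$ is the bound from Corollary \ref{cor:vectorspaces} and $C_k,c_k$ are the implicit constants in the index bound $\exp(O_k(\e^{-O_k(1)}))$. Given a $k$-stable $A \subseteq G = \Z/p\Z$ with $p \geq p_0$, Corollary \ref{cor:vectorspaces} supplies a subgroup $H \leq G$ which is $\e$-good for $A$ and satisfies $[G:H] \leq \exp(O_k(\e^{-O_k(1)})) < p$. Since the only proper subgroup of $\Z/p\Z$ has index exactly $p$, the bound forces $H = G$.

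Now the $\e$-good property of $H = G$, applied to the element $g = 0 \in G$, says that either $|A \cap G| \leq \e|G|$ or $|G \setminus A| \leq \e|G|$. Since $A \cap G = A$, this is precisely the desired dichotomy, completing the proof.

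There is essentially no obstacle here: the corollary is a direct specialisation of Corollary \ref{cor:vectorspaces} to the case $n = 1$, where the ``subgroup of bounded index'' is forced, by the primality of $|G|$, to coincide with the whole group. The quantitative content is entirely absorbed into the choice of $p_0$, which must simply be taken large enough to exceed the subgroup-index bound from Corollary \ref{cor:vectorspaces}.
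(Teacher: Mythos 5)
Your proof is correct and follows exactly the same route as the paper: apply Corollary \ref{cor:vectorspaces}, use the primality of $|G|$ to force the resulting subgroup $H$ to be all of $G$ once $p_0$ exceeds the index bound, and then read off the conclusion from $\e$-goodness of $G$. No differences worth noting.
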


\begin{proof}
Fix $\e\in (0,1)$, and let $n_1(k,\e)$ be as in Corollary \ref{cor:vectorspaces}. Choose $p_0\geq n_1$ sufficiently large so that $p_0$ is larger than the term $\exp(O_k(\e^{-O_k(1)}))$ in the conclusion of Corollary \ref{cor:vectorspaces}.  Suppose that $G=\Z/p\Z$ is a cyclic group of prime order $p\geq p_0$, and that $A\subseteq G$ is $k$-stable.  Applying Corollary \ref{cor:vectorspaces}, there is a subgroup $H$ of $G$ which is $\e$-good for $A$ and which has index at most $\exp(O_k(\e^{-O_k(1)}))$. By our choice of $p_0$, we must have $H=G$. Since $G$ is $\e$-good for $A$, either $|A\cap G|=|A|\leq \e|G|$ or $|G\setminus A|\leq \e |G|$.
\end{proof}

The proof of Corollary \ref{cor:vectorspaces} relied upon the fact that in a finite-dimensional vector space over a prime field the exponent of the group equals the smallest index of a proper subgroup. In a general finite abelian group this is not always the case. In the next section we therefore take a step back and revisit Proposition \ref{prop:theoremC} in view of the insight gained in Lemma \ref{lem:strucI}.

\section{Bootstrapping the argument}\label{sec:boot}

Lemma \ref{lem:strucI} from the preceding section allows us to bootstrap our argument using the following proposition.  

\begin{proposition}[Stable sets are dense on subgroups]\label{prop:densesubgroups}
For all $\epsilon\in (0,1/3)$ and $k\geq 2$, there are constants $C=C(k)>0$ and $n_3=n_3(k,\epsilon)$ such that the following holds.  Suppose that $A\subseteq G$ is $k$-stable and that $H\leq G$ satisfies $|H|\geq n_3$ and $|A\cap H|> \epsilon |H|$. Then there exists $x\in G$ and $H'\leq H$ of index at most $\exp(\epsilon^{-C})$ in $H$ such that $|A\cap (H'+x)|\geq (1-\epsilon)|H'|$.
\end{proposition}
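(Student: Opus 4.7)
The plan is to apply Theorem \ref{thm:mainthmgen} inside the subgroup $H$ to the $k$-stable set $A \cap H$, and then to upgrade the resulting good Bohr set in $H$ to a genuine subgroup using Lemma \ref{lem:strucI} together with an averaging argument. Concretely, I would set $\mu := \epsilon/3$, take $n_3 := n_0(k,\mu,1)$, and apply Theorem \ref{thm:mainthmgen} in the ambient group $H$ with parameters $\mu$ and $r = 1$ to obtain a regular Bohr set $B = B(K,\rho) \subseteq H$ that is $2\mu$-good for $A \cap H$, with $|K| \leq O_k(\mu^{-O_k(1)})$ and $\rho \geq \mu^{O_k(1)}$. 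Then, by Lemma \ref{lem:regexist}, I would pick a regular $B' = B(K,\rho')$ with $B' \prec_{2\mu} B$ and set $H' := \langle B'\rangle \leq H$. Lemma \ref{lem:sizefact} immediately yields $[H:H'] \leq \rho'^{-|K|} \leq \exp(\epsilon^{-C(k)})$ once one plugs in the parameter bounds for $|K|$ and $\rho'$.

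Next, I would define $I := \{y \in H : |(B+y)\cap A| \geq (1-2\mu)|B|\}$ and establish two things. First, $I$ is non-empty: since $B \subseteq H$, exchanging the order of summation gives $\sum_{y \in H} |(B+y)\cap A| = |B|\cdot|A \cap H| > \epsilon|B||H|$, and the $2\mu$-goodness of $B$ forces every summand into $[0, 2\mu|B|] \cup [(1-2\mu)|B|, |B|]$, so the fraction of $y \in H$ landing in $I$ is at least $\epsilon - 2\mu = \epsilon/3$. Second, applying Lemma \ref{lem:strucI} in the ambient group $H$ to $A\cap H$ with goodness parameter $2\mu$ (the internal call to Theorem \ref{thm:mainthmgen} uses $\mu = (2\mu)/2$, matching our setup) gives $I = I + H'$, so $I$ is a non-empty union of cosets of $H'$.

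The main step is an averaging/pigeonhole argument on a single coset of $H'$ contained in $I$. Fixing any $x \in I$, I have $x + H' \subseteq I$, so summing the bound $|(B+y) \cap A| \geq (1-2\mu)|B|$ over $y \in x + H'$ and swapping the order of summation yields
\[\sum_{b \in B} |A \cap (x+b+H')| \;\geq\; (1-2\mu)|H'||B|.\]
Grouping $b \in B$ by its $H'$-coset in $H$, and writing $m_j := |B \cap (b_j + H')|$ so that $\sum_j m_j = |B|$, the left-hand side becomes a weighted average of the quantities $|A \cap (x+b_j+H')|$, each of which is trivially bounded above by $|H'|$. Pigeonhole therefore produces some coset representative $b_j$ with $|A \cap (x+b_j+H')| \geq (1-2\mu)|H'| \geq (1-\epsilon)|H'|$, so taking the element $x+b_j \in G$ proves the proposition.

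The main obstacle I anticipate is simply the parameter bookkeeping: one must verify that $\prec_{2\mu}$ is compatible with the Bohr set produced by Theorem \ref{thm:mainthmgen}, that the hypothesis $\epsilon < 1/3$ leaves enough room for Lemma \ref{lem:strucI} to apply in $H$ (it needs $2\mu < 1/3$, which is automatic from $\mu = \epsilon/3$), and that the bound $[H:H'] \leq \rho'^{-|K|}$ collapses to $\exp(\epsilon^{-O_k(1)})$ after substituting the polynomial-in-$\mu^{-1}$ bound on $|K|$ and the logarithmic bound on $\rho'^{-1}$ coming from Theorem \ref{thm:mainthmgen}. None of these steps should pose a genuine difficulty beyond careful accounting.
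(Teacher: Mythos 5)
Your proof is correct and follows essentially the same route as the paper: apply Theorem \ref{thm:mainthmgen} inside $H$ to $A\cap H$ (after noting via \cite[Lemma 3]{Terry:5CFoQi42} that $A\cap H$ is $k$-stable in $H$), invoke Lemma \ref{lem:strucI} to conclude $I=I+H'$ with $H'=\langle B'\rangle$ for $B'\prec B$, and then double-count to locate a coset of $H'$ almost full of $A$. The paper phrases the final double-counting step as an edge count in a bipartite graph with parts $B$ and $H'+z$ (and verifies $I\neq\emptyset$ by observing that $I=\emptyset$ would force $|A\cap H|\leq\epsilon|H|$), whereas you phrase both as direct averaging/pigeonhole arguments and use $\mu=\epsilon/3$ rather than $\epsilon/2$; these are only cosmetic differences, and the intermediate grouping of $b\in B$ by $H'$-coset is harmless but unnecessary since pigeonhole over $b\in B$ already suffices.
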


We will need the following fact for the proof of Proposition \ref{prop:densesubgroups}. 

\begin{lemma}\label{lem:stableonsubgroups}
If $A\subseteq G$ is $k$-stable and $H\leq G$, then $A\cap H$ is $k$-stable as a subset of $H$.
\end{lemma}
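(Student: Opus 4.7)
The plan is to prove the contrapositive: assume $A\cap H$ has the $k$-order property as a subset of $H$, and deduce that $A$ has the $k$-order property as a subset of $G$, contradicting the hypothesis that $A$ is $k$-stable.

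Concretely, suppose witnesses $a_1,\ldots,a_k$ and $b_1,\ldots,b_k$ in $H$ satisfy $a_i+b_j \in A\cap H$ if and only if $i\leq j$. The key observation, and really the only step, is that since $H$ is a subgroup of $G$ and $a_i,b_j\in H$, the sum $a_i+b_j$ already lies in $H$ for every pair $(i,j)$. Consequently the condition $a_i+b_j\in A\cap H$ reduces to the condition $a_i+b_j\in A$. Viewing the same sequences as elements of $G$, we then get $a_i+b_j\in A$ iff $i\leq j$, so $A$ has the $k$-order property in $G$, the desired contradiction.

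There is no real obstacle here: the lemma is a direct consequence of the definition of the order property together with the closure of $H$ under addition, and no quantitative machinery is required. The only thing to be mildly careful about is the logical direction — one shows $A\cap H$ is $k$-stable in $H$ by lifting a would-be witness of instability from $H$ to $G$, rather than the other way around. The statement can therefore be proved in just a few lines.
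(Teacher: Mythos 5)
Your proof is correct, and the key observation — that closure of $H$ under addition reduces the condition $a_i+b_j\in A\cap H$ to $a_i+b_j\in A$, so a witness of the order property in $H$ lifts to one in $G$ — is exactly the right idea. The paper's proof takes essentially the same route but packages it differently, citing \cite[Lemma 3]{Terry:5CFoQi42} to get that $A\cap H$ is $k$-stable \emph{in $G$} and then noting this trivially implies $k$-stability in $H$; your argument is self-contained and equally valid.
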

\begin{proof}
By \cite[Lemma 3]{Terry:5CFoQi42}, $A\cap H$ is $k$-stable in $G$. By definition this implies that $A\cap H$ is $k$-stable in $H$.
\end{proof}

\begin{proofof}{Proposition \ref{prop:densesubgroups}}
Fix $k\geq 2$, $\epsilon \in (0,1)$, and set $\mu:=\epsilon/2$. Let $F$ and $D=D(k)$ be as in Theorem \ref{thm:mainthmgen}. Choose $n_3$ equal to $n_0(k,\mu,1)$ given by Theorem \ref{thm:mainthmgen}.  Suppose that $A\subseteq G$ is $k$-stable, $H\leq G$, $|H|\geq n_3$, and that $|A\cap H|> \epsilon |H|$.  By Lemma \ref{lem:stableonsubgroups}, $A\cap H$ is $k$-stable in $H$, so applying Theorem \ref{thm:mainthmgen} to $A\cap H$, $H$, $\mu$ and $r=1$ yields a regular Bohr set $B=B(K,\rho)$ in $H$ which is $\epsilon$-good for $A\cap H$ as a subset of $H$ and which satisfies $|K|\leq (D/\mu F)^D$ and $\rho\geq (\mu F/D)^D$.  

Suppose that $B'$ is a Bohr set such that $B'\prec_{\epsilon}B$. Let $I:=\{y\in H: |(A\cap H)\cap (B+y)|\geq (1-\epsilon)|B|\}$ as in the statement of Lemma \ref{lem:strucI}. We already saw in the proof of Corollary \ref{cor:vectorspaces} that if $I=\emptyset$, then $|A\cap H|\leq \e|H|$ (this part of the proof did not use any of the other assumptions). We therefore conclude that $I$ is non-empty. Setting $H':=\langle B'\rangle$, Lemma \ref{lem:strucI} now tells us that $I=I+H'$. 

It remains to show that $A\cap H$ fills some coset of $H'$ almost completely. Fix $z\in I$ and consider the bipartite graph with vertex set $V:=U\sqcup W$, where $U:=B$ and $W:=H'+z$, and with edge set $E:=\{uw: u+w\in A\}$. Since $B$ is $\epsilon$-good for $A\cap H$ in $H$ and $H'+z\subseteq I$, we have that for all $w\in W$, $|N(w)\cap U|\geq (1-\epsilon)|B|$.  Consequently, $|E|\geq (1-\epsilon)|B||H'|$.  Thus there is $u\in U$ such that $|N(u)\cap W|\geq (1-\epsilon)|H'|$. In other words, $|(A\cap H)\cap (H'+z+u)|\geq (1-\epsilon)|H'|$, so we set $x:=z+u$ to obtain the desired translate of $H'$.

We only need to check that the index of $H'$ satisfies the stated bounds. If $B'=B'(K',\rho')$ then we know $|H'|\geq |B'|\geq (\rho')^{|K'|}|H|$, so the index of $H'$ in $H$ is at most $(\rho')^{-|K'|}$.  By Lemma \ref{lem:regexist}, we may assume that $\rho'\geq \epsilon \rho/400|K|\geq \e (\e F/2D)^{2D}/400$. Since $K=K'$,  $|K'|\leq (2D/\e F)^D$.  It follows that the index of $H$ is at most 
$$
(\rho')^{-|K'|}\leq (\e (\e F/2D)^{2D}/400)^{-(2D/\e F)^D} \leq \exp(\epsilon^{-C})
$$
for some constant $C=C(k)$.
\end{proofof}

Equipped with Proposition \ref{prop:densesubgroups}, we can now prove the following refinement of Theorem \ref{thm:mainthmgen}.

\begin{theorem}[Efficient regularity with respect to subgroups]\label{thm:maingen2}
For all $k\geq 2$ and $\mu\in (0,1)$, there are constants $M=M(k)$ and $n_4=n_4(k,\mu)$ such that the following holds. Suppose that $G$ is a finite abelian group of order at least $n_4$, and that $A\subseteq G$ is $k$-stable. Then there exists a $\mu$-good subgroup $H\leq G$ of index at most $\exp(\mu^{-M})$.
\end{theorem}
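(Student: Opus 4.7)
The plan is to re-run the tree-construction argument from the proof of Theorem~\ref{thm:mainthmgen}, but this time invoking Proposition~\ref{prop:densesubgroups} (which produces subgroups) in place of Proposition~\ref{prop:theoremC} (which produced Bohr sets). This is the bootstrapping mentioned after Theorem~\ref{thm:mainintro2}: having established via the Bohr-set version that stable sets are dense on \emph{some} translate of a subgroup of controlled index, we can iterate this fact along the branches of a binary tree to obtain efficient regularity with respect to the subgroup itself.

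Fix $k\geq 2$ and $\mu\in(0,1)$, set $\epsilon:=\min\{\mu,1/6\}$, and let $d=d(k)$ and $f:=f_k$ be as in Section~\ref{sec:mainproof}, with $C=C(k)$ as in Proposition~\ref{prop:densesubgroups}. Choose $n_4$ large enough that $n_4\geq n_3(k,\epsilon)\exp(d\epsilon^{-C})$. Suppose $G$ has order at least $n_4$, that $A\subseteq G$ is $k$-stable, and (as we may, else $H=G$ is already $2\epsilon$-good) that $2\epsilon|G|<|A|<(1-2\epsilon)|G|$. Assume for a contradiction that no subgroup $H\leq G$ of index at most $\exp(d\epsilon^{-C})$ is $2\epsilon$-good for $A$. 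I would inductively construct sequences $\langle H_\eta:\eta\in 2^{\leq d}\rangle$ of subgroups of $G$, $\langle g_\eta,x_\eta:\eta\in 2^{\leq d}\rangle$ of elements of $G$, and $\langle X_\eta:\eta\in 2^{\leq d}\rangle$ of subsets of $G$ such that, for each $t\leq d$ and $\eta\in 2^t$: (i) the index of $H_\eta$ in $G$ is at most $\exp(t\epsilon^{-C})$; (ii) $|N^i(g_\eta)\cap H_\eta|>2\epsilon|H_\eta|$ for each $i\in\{0,1\}$; (iii) $|X_\eta\cap H_\eta|\geq(1-\epsilon)|H_\eta|$; (iv) $X_\eta$ is $f^{t-1}(k)$-stable; and (v)--(vi) the recursive definition $X_{\sigma\wedge i}=N^i(g_\sigma+x_{\sigma\wedge i})\cap(X_\sigma-x_{\sigma\wedge i})$ and the tree property from the proof of Theorem~\ref{thm:mainthmgen} hold verbatim. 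The base case $t=0$ is immediate from the WLOG assumption.

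For the inductive step, fix $\eta\in 2^t$ and $i\in\{0,1\}$. Conditions (ii) and (iii) combine to give $|N^i(g_\eta)\cap X_\eta\cap H_\eta|>\epsilon|H_\eta|$, and $N^i(g_\eta)\cap X_\eta$ is $f^t(k)$-stable by \cite[Lemmas~1--2]{Terry:5CFoQi42}. Since $|H_\eta|\geq |G|/\exp(d\epsilon^{-C})\geq n_3(k,\epsilon)$ by the choice of $n_4$, I apply Proposition~\ref{prop:densesubgroups} inside $H_\eta$ with parameter $\epsilon$ to obtain a subgroup $H_{\eta\wedge i}\leq H_\eta$ of index at most $\exp(\epsilon^{-C})$ in $H_\eta$ and an element $x_{\eta\wedge i}\in G$ such that, upon setting $X_{\eta\wedge i}:=(N^i(g_\eta)\cap X_\eta)-x_{\eta\wedge i}$, property (iii) holds at level $t+1$. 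The index of $H_{\eta\wedge i}$ in $G$ is then at most $\exp((t+1)\epsilon^{-C})\leq\exp(d\epsilon^{-C})$, so the contradiction hypothesis supplies a $g_{\eta\wedge i}$ verifying (ii); property (iv) follows from \cite[Lemma~1]{Terry:5CFoQi42}, and (v)--(vi) are formal. After $d$ steps the usual recipe, namely choosing $c_\eta\in X_\eta$ and setting $a_\eta:=c_\eta+\sum_{\sigma\trianglelefteq\eta}x_\sigma$ and $b_\sigma:=g_\sigma-x_{\sigma|_1}-\cdots-x_\sigma$, produces a tree of height $d$ witnessing $d(A)>d$, contradicting Theorem~\ref{thm:treefact}. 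Taking $M=M(k)$ so that $\exp(d\epsilon^{-C})\leq\exp(\mu^{-M})$ (using $\epsilon^{-1}\leq\max\{6,\mu^{-1}\}$) concludes the proof.

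I do not expect any serious obstacle: the stability bookkeeping, the use of the failed-regularity hypothesis to extract each $g_\eta$, and the extraction of the contradicting tree from the $X_\eta$ are essentially verbatim from the proof of Theorem~\ref{thm:mainthmgen}. The one point that requires care is maintaining $|H_\eta|\geq n_3(k,\epsilon)$ throughout the construction so that Proposition~\ref{prop:densesubgroups} is always applicable; but because the tree has bounded height $d(k)$ and the index of $H_\eta$ in $G$ grows by a controlled factor of at most $\exp(\epsilon^{-C})$ at each step, a single lower bound $|G|\geq n_3(k,\epsilon)\exp(d\epsilon^{-C})$ suffices for every level.
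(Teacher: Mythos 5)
Your proposal follows the same strategy as the paper's proof: re-run the tree construction from Theorem~\ref{thm:mainthmgen} with Proposition~\ref{prop:densesubgroups} in place of Proposition~\ref{prop:theoremC}, keeping the same inductive invariants and extracting the contradicting witness to instability in the same way. This is exactly what the paper does, so the overall approach is correct.

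There is, however, one piece of bookkeeping you have wrong, and it is a genuine (if fixable) gap. The constant $C$ in Proposition~\ref{prop:densesubgroups} depends on the \emph{stability parameter of the set being fed into it}, not on the ambient $k$. At level $t$ of the tree you apply the proposition to $N^i(g_\eta)\cap X_\eta$, which (as you correctly note) is only $f^t(k)$-stable, not $k$-stable. So the index gain at that step is $\exp(\epsilon^{-C(f^t(k))})$, not $\exp(\epsilon^{-C(k)})$, and these constants grow along the tree since $f$ is increasing. The same issue affects your lower bound on $|H_\eta|$: the threshold needed is $n_3(f^t(k),\epsilon)$, which you implicitly replace by $n_3(k,\epsilon)$. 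Your claimed invariant that the index of $H_\eta$ in $G$ is at most $\exp(t\epsilon^{-C})$ therefore does not follow from the inductive step as written. The paper handles this by defining $M_t:=\sum_{i=-1}^t C(f^i(k))$ and taking $M:=M_{d+2}$, and by choosing $n_4 > \exp(\epsilon^{-M})\max_{0\leq t\leq d}(n_3(f^{t-1}(k),\epsilon)+2)$. Since the tree has bounded height $d=d(k)$, replacing your fixed $C$ by $\max_{0\leq t< d}C(f^t(k))$ (or the sum, as the paper does) and similarly for $n_3$ repairs the argument and still yields $M=M(k)$; so the gap is a missing iterate in the constants, not a structural flaw.
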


Theorem \ref{thm:maingen2} is equivalent to Theorem \ref{thm:mainintro2} as stated in the introduction. Before re-running the tree argument to prove Theorem \ref{thm:maingen2}, let us deduce the following structural consequence valid in any finite abelian group.

\begin{corollary}[Structure of stable sets in a finite abelian group]\label{cor:gengroup}
For all $\epsilon\in (0,1)$ and $k\geq 2$, there is $n_5=n_5(k,\epsilon)$ such that the following holds. Suppose that $G$ is a finite abelian group of order at least $n_5$, and that $A\subseteq G$ is $k$-stable. Then there is a subgroup $H\leq G$ of index at most $\exp(\epsilon^{-O_k(1)})$ and a set $J\subseteq G/H$ such that $|A\Delta \bigcup_{H+g\in J} (H+g)|\leq \epsilon |G|$.
\end{corollary}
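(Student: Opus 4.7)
The plan is to deduce Corollary \ref{cor:gengroup} directly from Theorem \ref{thm:maingen2} (the subgroup version of the regularity statement) by forming the union of those cosets of the good subgroup that $A$ almost entirely fills. This is the exact analogue of the deduction of Corollary \ref{cor:vecspace} from Corollary \ref{cor:vectorspaces}, except that we now have access to a good \emph{subgroup} in arbitrary finite abelian groups thanks to the bootstrapping in Theorem \ref{thm:maingen2}.

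More concretely, given $\epsilon\in(0,1)$ and $k\geq 2$, first I would set $\mu:=\epsilon$ and $n_5:=n_4(k,\mu)$, where $n_4$ is the threshold from Theorem \ref{thm:maingen2}. Assuming $|G|\geq n_5$ and $A\subseteq G$ is $k$-stable, Theorem \ref{thm:maingen2} yields a subgroup $H\leq G$ of index $[G:H]\leq \exp(\mu^{-M})=\exp(\epsilon^{-O_k(1)})$ such that $H$ is $\mu$-good for $A$. Thus for every coset $H+g$, Definition \ref{def:good} (applied with $y=-g$) gives the dichotomy
\[
|A\cap (H+g)|\leq \mu|H|\quad\text{or}\quad |(H+g)\setminus A|\leq \mu|H|.
\]

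Next I would define
\[
J:=\{H+g\in G/H : |A\cap(H+g)|\geq (1-\mu)|H|\},
\]
so that cosets in $J$ are almost entirely contained in $A$, while cosets outside $J$ meet $A$ in at most $\mu|H|$ points. Writing $U:=\bigcup_{H+g\in J}(H+g)$, we have
\[
|A\Delta U|=\sum_{H+g\in J}|(H+g)\setminus A|+\sum_{H+g\notin J}|A\cap(H+g)|\leq \mu|H|\cdot [G:H]=\mu|G|=\epsilon|G|,
\]
which is the desired bound. Since every step is straightforward bookkeeping once Theorem \ref{thm:maingen2} is in hand, I do not anticipate any real obstacle; the only mildly delicate point is to keep the quantitative dependence transparent, but since the index bound in Theorem \ref{thm:maingen2} is already of the claimed form $\exp(\epsilon^{-O_k(1)})$, this is immediate.
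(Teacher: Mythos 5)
Your proposal is correct and follows essentially the same route as the paper: apply Theorem~\ref{thm:maingen2} to obtain an $\epsilon$-good subgroup $H$ of bounded index, let $J$ be the set of cosets that $A$ nearly fills, and bound $|A\Delta\bigcup_{H+g\in J}(H+g)|$ by summing the error over all $[G:H]$ cosets. The paper's bookkeeping is presented slightly differently (splitting into $X$ and $Y$ and bounding $|A\setminus X|$ and $|X\setminus A|$ separately), but the argument and the resulting bound $\epsilon|G|$ are the same.
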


The proof of Corollary \ref{cor:gengroup} is near-identical to that of \cite[Corollary 1]{Terry:5CFoQi42}. We include it here for the sake of completeness.

\begin{proof}
Let $n_5=n_4(k,\e)$ as in Theorem \ref{thm:maingen2}, which implies that there is a subgroup $H\leqslant G$ of index at most $\exp(\epsilon^{-O_k(1)})$ which is $\e$-good for $A$.  This means that for all $g\in G$, either $|(A-g)\cap H|=|A\cap (H+g)|\leq \epsilon |H|$ or $|(A-g)\cap H|=|A\cap (H+g)|\geq (1-\epsilon) |H|$.  Let $J:=\{H+g\in G/H: |A\cap (H+g)|\geq (1-\epsilon) |H|\}$, and let 
\begin{align*}
X:=\bigcup_{H+g\in J}(H+g)\text{ and }Y:=\bigcup_{H+g\in (G/H)\setminus J}(H+g). 
\end{align*}
Then by definition of $X$, $Y$, $J$ and $\epsilon$-goodness of $H$, we have that 
\[|A\setminus X|=|A\cap Y|\leq (\epsilon |H|)|(G/H)\setminus J|=\epsilon|H|(|G|/|H|-|J|),\]
as well as
\[|X\setminus A|\leq \epsilon |H||J|. \]
Thus $|A\Delta X|\leq \epsilon|H|(|G|/|H|-|J|)+\epsilon |H||J| = \epsilon |H|(|G|/|H|)=\epsilon |G|$ as desired.
\end{proof}

\begin{proofof}{Theorem \ref{thm:maingen2}} For $\ell\geq 2$, let $C(\ell)$ be the constant depending on $\ell$ given by Proposition \ref{prop:densesubgroups}.  Let $d=d(k)$ be as in Theorem \ref{thm:treefact}.  For each $0\leq t\leq d+2$, let $M_t:=\sum_{i=-1}^tC(f^i(k))$ and set $M:=M_{d+2}$.  Fix $\mu\in (0,1)$ and let $\epsilon:=\mu/8$.  Let $n_4$ be sufficiently large so that $n_4>\exp(\epsilon^{-M})\max\{n_3(\epsilon, f^{t-1}(k))+2: 0\leq t\leq d\}$, where $n_3$ is as in Proposition \ref{prop:densesubgroups}. Suppose that $G$ is an abelian group of order at least $n_4$, and that $A\subseteq G$ is $k$-stable.  We shall find a subgroup $H\leq G$ which is $2\epsilon$-good for $A$ and which has index at most $\exp(\e^{-M})$.

If $|A|\leq 2\e|G|$ or $|A|\geq (1-2\e)|G|$, then we are done by taking $H=G$. So we may assume throughout the proof that $2\e < |A|< (1-2\e)|G|$.

Suppose towards a contradiction that there is no subgroup of index at most $\exp(\e^{-M})$ which is $2\epsilon$-good for $A$.  We now simultaneously construct four sequences $B_\eta$, $g_\eta$,$x_\eta$, $X_\eta$, indexed by elements of $2^{\leq d}$, that satisfy conditions (a'), (b'), (c'), where (b') and (c') are identical to (b) and (c) in the proof of Theorem \ref{thm:mainthmgen}, and (a') is given by 
\begin{enumerate}[(a')]
\item $\langle B_{\eta}: \eta\in 2^{ \leq d}\rangle$, where each $B_{\eta}\leq G$.
\end{enumerate}
For each $0\leq t<d$ and $\eta\in 2^t$, the sequences (a')-(c') will satisfy conditions labeled (i')-(vi'), where (ii')-(vi') are identical to (ii)-(vi) in the proof of Theorem \ref{thm:mainthmgen} and (i') is given by
\begin{enumerate}[(i')]
\item $B_{\eta}\leq G$ has index at most $\exp(\e^{-M_{t-1}})$.\label{parameterbounds}
\end{enumerate}
We proceed by induction on $t$. For the base case $t=0$, set $x_{\langle \rangle}:=0$, $g_{\langle \rangle}:=0$, $B_{\langle \rangle}:=G$, and $X_{\langle \rangle}:=G$.  As in the proof of Theorem \ref{thm:mainthmgen}, it is straightforward to verify that (i')-(vi') hold for all $\eta\in 2^0=\{\langle \rangle\}$.  

Suppose now that $0\leq t<d$ and assume we have inductively constructed $\langle B_{\eta}: \eta\in 2^{\leq t}\rangle$, $\langle g_{\eta}: \eta\in 2^{\leq t}\rangle$, $\langle x_{\eta}: \eta\in 2^{\leq t}\rangle$, and $\langle X_{\eta}: \eta\in 2^{\leq t}\rangle$ such that (i')-(vi') hold for all $\eta \in 2^t$.  We now show how to extend each sequence by defining $B_{\eta\wedge i}, g_{\eta \wedge i}, x_{\eta\wedge i}, X_{\eta\wedge i}$ for each $\eta \in 2^t$ and $i\in \{0,1\}$.  Fix $\eta \in 2^{t}$.  Observe that our induction hypotheses (\ref{ghypp*}') and (\ref{largeintersection*}') imply that for each $i\in \{0,1\}$,
\begin{align}\label{large1*}
|N^i(g_{\eta})\cap X_{\eta}\cap B_{\eta}|>(2\epsilon-\epsilon)|B_{\eta}|\geq \e|B_{\eta}|.
\end{align}
The same argument as in the proof of Theorem \ref{thm:mainthmgen} implies that, for each $i\in \{0,1\}$, the set $N^i(g_{\eta})\cap X_{\eta}$ is $h(k+1,f^{t-1}(k))=f^{t}(k)$-stable.

The inductive hypothesis (i') on $B_{\eta}$ and our choice of $n_4$ imply that $|B_{\eta}|\geq n_4/\exp(\e^{-M})>n_3(f^{t}(k),\epsilon)$. Combining this with the fact that $N^i(g_{\eta})\cap X_{\eta}$ is $f^{t}(k)$-stable and (\ref{large1*}), we see that Proposition \ref{prop:densesubgroups} implies that for each $i\in \{0,1\}$, there is a subgroup $B_{\eta \wedge i}$ of $B_{\eta}$ and $x_{\eta \wedge i}\in G$ such that $B_{\eta \wedge i}$ has index at most $\exp(\epsilon^{-C(f^t(k))})$ in $B_{\eta}$ and
\begin{align}
|(N^i(g_{\eta})&\cap X_{\eta}) \cap (B_{\eta \wedge i}+x_{\eta \wedge i})|\geq (1-\epsilon)|B_{\eta \wedge i}|.\label{large7*}
\end{align}
For each $i\in \{0,1\}$, set $X_{\eta \wedge i}:=(N^i(g_{\eta})\cap X_{\eta})-x_{\eta\wedge i}=N^i(g_{\eta}+x_{\eta \wedge i})\cap( X_{\eta}-x_{\eta \wedge i} )$.  The bound on the index of $B_{\eta \wedge i}$ in $B_{\eta}$ together with our inductive hypothesis (\ref{parameterbounds}') on $B_{\eta}$ yields that for each $i\in \{0,1\}$, the index of $B_{\eta \wedge i}$ in $G$ is at most $\exp(\e^{-M_{t-1}-C(f^t(k))})=\exp(\e^{-M_t})$, establishing (\ref{parameterbounds}') for $B_{\eta\wedge i}$. Since the index of $B_{\eta\wedge i}$ is less than $\exp(\e^{-M})$, $B_{\eta\wedge i}$ cannot be $2\epsilon$-good.  Consequently,  there is $g_{\eta\wedge i}\in G$ such that for each $j\in \{0,1\}$, $|N^j(g_{\eta\wedge i})\cap B_{\eta\wedge i}|>2\epsilon|B_{\eta\wedge i}|$.  This completes our construction of $B_{\tau}, x_{\tau}, g_{\tau}, X_{\tau}$ for each $\tau \in 2^{t+1}$. 

We have already shown that (i') holds for $B_{\tau}$ for all $\tau\in 2^{t+1}$, and the verifications of (ii') and (iv')-(vi') are identical to the proof of Theorem \ref{thm:mainthmgen}. For (iii'), $|X_{\tau}\cap B_{\tau}|\geq (1-\e)|B_{\tau}|$ follows from (\ref{large7*}) and the definition of $X_{\tau}$. The inequality $(1-\e)|B_{\tau}|\geq 1$ holds because the index of $B_{\tau}$ is at most $\exp(\e^{-M})$ and $|G|\geq n_4$.   Thus we may assume that we have constructed sequences (a'), (b'), (c') satisfying properties (\ref{parameterbounds}')-(\ref{importanthypothesis}'). Exactly as in the proof of \cite[Theorem 3]{Terry:5CFoQi42}, this allows us to show that the tree bound $d(A)$ satisfies $d(A)>d$, contradicting the initial assumption that $A$ is $k$-stable.
\end{proofof}

\providecommand{\bysame}{\leavevmode\hbox to3em{\hrulefill}\thinspace}
\providecommand{\MR}{\relax\ifhmode\unskip\space\fi MR }
\providecommand{\MRhref}[2]{%
  \href{http://www.ams.org/mathscinet-getitem?mr=#1}{#2}
}
\providecommand{\href}[2]{#2}


\end{document}